\newtheorem{theorem}{Theorem}[section]
\newtheorem{proposition}[theorem]{Proposition}
\newtheorem{lemma}[theorem]{Lemma}
\newtheorem{corollary}[theorem]{Corollary}
\newtheorem{observation}[theorem]{Observation}
\newtheorem{conjecture}[theorem]{Conjecture}
\newtheorem{example}{Example}
\theoremstyle{remark}
\newtheorem{remark}[theorem]{Remark}
\newcommand{\NN}{\ensuremath{\mathbb N}}
\newcommand{\RR}{\ensuremath{\mathbb R}}
\newcommand{\ignore}[1]{}
\newcommand{\ceil}[1]{\left \lceil #1 \right \rceil}
\newcommand{\fancyG}{\ensuremath{\mathcal G}}
\newcommand{\fancyH}{\ensuremath{\mathcal H}}
\def\ex{\operatorname{ex}}
\newcommand{\E}{\mathbb{E}}
\title{Improved bounds on a generalization of Tuza's conjecture}
\author{Abdul Basit\thanks{Department of Mathematics, Iowa State University, Ames IA.\newline \indent\,\,\,\,Email: \texttt{\{abasit, dam1, has, mattds, zerbib\}@iastate.edu}. \newline\indent\,\,\,\,The research of Shira Zerbib was supported by NSF grant DMS-1953929.
\newline\indent\,\,\,\,The research of Daniel McGinnis was supported by NSF grant DMS-1839918 (RTG).} \and 
Daniel McGinnis\footnotemark[1] \and 
Henry Simmons\footnotemark[1] \and
Matt Sinnwell\footnotemark[1] \and
Shira Zerbib\footnotemark[1]
}
\date{}
\begin{document}

\maketitle
\begin{abstract}
For an $r$-uniform hypergraph $H$, let $\nu^{(m)}(H)$ denote the maximum size of a set~$M$ of edges in $H$ such that every two edges in $M$ intersect in less than $m$ vertices, and let $\tau^{(m)}(H)$ denote the minimum size of a collection $C$ of $m$-sets of vertices such that every edge in $H$ contains an element of $C$. The fractional analogues of these parameters are denoted by $\nu^{*(m)}(H)$ and $\tau^{*(m)}(H)$, respectively. Generalizing a famous conjecture of Tuza on covering triangles in a graph, Aharoni and Zerbib conjectured that for every $r$-uniform hypergraph $H$,  $\tau^{(r-1)}(H)/\nu^{(r-1)}(H) \leq \lceil{\frac{r+1}{2}}\rceil$. In this paper we prove bounds on the ratio between the parameters $\tau^{(m)}$ and $\nu^{(m)}$, and their fractional analogues. Our main result is that, for every $r$-uniform hypergraph~$H$,
\[ \tau^{*(r-1)}(H)/\nu^{(r-1)}(H) \le \begin{cases} 
\frac{3}{4}r - \frac{r}{4(r+1)} &\text{for }r\text{ even,}\\
\frac{3}{4}r - \frac{r}{4(r+2)} &\text{for }r\text{ odd.} \\
\end{cases} \]
This improves the known bound of $r-1$.
We also prove that, for every $r$-uniform hypergraph $H$,     $\tau^{(m)}(H)/\nu^{*(m)}(H) \le \operatorname{ex}_m(r, m+1)$, where the Tur\'an number $\operatorname{ex}_r(n, k)$ is the  maximum number of edges in an $r$-uniform hypergraph on $n$ vertices that does not contain a copy of the complete $r$-uniform hypergraph on $k$ vertices. Finally, we prove further bounds in the special cases $(r,m)=(4,2)$ and  $(r,m)=(4,3)$.
\end{abstract}

\section{Introduction}

\subsection{Definitions}  We restrict our attention to {\em $r$-uniform hypergraphs} (or {\em $r$-graphs}), i.e., hypergraphs where all edges are of size $r$.  Denote by $H[X]$ the sub-hypergraph of $H$ induced by $X \subset V(H)$, that is, containing all edges that are contained in $X$. Throughout the paper, we set $V(H) = [n]$, and use the abbreviation $a_1a_2\dots a_k$ to denote the set $\{a_1, a_2, \dots , a_k\} \subseteq V(H)$. We shall also identify $H$ with its edge set $E(H)$.

A {\em matching} in a hypergraph $H$ is a set of disjoint edges. The {\em matching number} $\nu(H)$ is the maximum size of a matching in $H$. A {\em cover} is a set of vertices intersecting all edges of $H$. The {\em covering number} $\tau(H)$ is the minimum size of a cover. Clearly, in an $r$-graph~$H$ we have $ \nu(H) \leq \tau(H) \leq r\nu(H)$. 

More generally, an {\em $m$-matching} $M$ in a hypergraph $H$ is a set of edges such that any two edges in $M$ intersect in less than $m$ vertices. An {\em $m$-cover} $C$ of $H$ is a collection of $m$-sets of vertices such that every edge in $H$ contains at least one member of $C$. So a 1-matching is  a matching and a 1-cover is  a cover. Denote by $\nu^{(m)}(H)$ the {\em $m$-matching number}, that is, the maximum size of an $m$-matching in $H$ and by $\tau^{(m)}(H)$  the {\em $m$-covering number}, namely the minimum size of an $m$-cover of $H$.

For a hypergraph $H$, let $H^{(m)}$ be the hypergraph whose  vertex set is $\binom{V(H)}{m}$ and whose edge set is $\left\{ \binom{e}{m}: e \in H\right\}$. Note that $M \subset H$ is an $m$-matching in $H$ if and only if $\left\{ \binom{e}{m}: e \in M\right\}$ is a matching in $H^{(m)}$. Additionally, a collection $C$ of $m$-sets is an $m$-cover of $H$ if and only if $C$ is a cover of $H^{(m)}$. Thus, $\nu^{(m)}(H) = \nu(H^{(m)})$ and $\tau^{(m)}(H) = \tau(H^{(m)})$.

We will also consider fractional variants of these parameters. A {\em fractional matching} in~$H$ is a function $s: H \rightarrow \RR_{\ge 0}$ satisfying  $\sum_{e \ni v} s(e) \leq 1$ for every $v \in V(H)$. The {\em size} of a fractional matching is $|s|=\sum_{e \in H} s(e)$, and the {\em fractional matching number} of $H$, denoted by $\nu^*(H)$, is the maximum size of a fractional matching of $H$. Similarly, a {\em fractional cover} of $H$ is a function $t: V(H) \rightarrow \RR_{\ge 0}$ satisfying $\sum_{v \in e} t(v) \geq 1$ for every $e \in H$. The {\em size} of a fractional cover is $|t|=\sum_{v \in V(H)} t(v)$, and the {\em fractional covering number} of $H$, denoted by $\tau^*(H)$, is the minimum size of a fractional cover of $H$. 

Determining  the fractional matching and covering numbers is a linear programming problem. The two problems form a dual pair so by the LP duality principle,  $\tau^*(H)=\nu^*(H)$ for every hypergraph $H$. Thus for an $r$-graph $H$ we have \begin{equation}
\label{eq:easylp1}
\nu(H) \leq \nu^*(H) = \tau^*(H) \leq \tau(H) \leq r \nu(H).
\end{equation}

Define a fractional $m$-matching and fractional $m$-cover in $H$ to be a fractional matching and fractional cover in $H^{(m)}$, respectively, and  let $\nu^{*(m)}(H) = \nu^*(H^{(m)})$ and $\tau^{*(m)}(H) = \tau^*(H^{(m)})$. 
Then \eqref{eq:easylp1} implies
\begin{equation*}
\nu^{(m)}(H) \leq \nu^{*(m)}(H) = \tau^{*(m)}(H) \leq \tau^{(m)}(H) \leq \binom{r}{m} \nu^{(m)}(H).
\end{equation*}

\subsection{Tuza's conjecture and its generalizations}
For a finite graph $G$, let $\nu_t(G)$ be the maximum size of a set of edge-disjoint triangles in $G$, and let $\tau_t(G)$ is the minimum size of a set $C$ of edges with the property that each triangle contains a member of $C$. Clearly, $\nu_t(G) \leq \tau_t(G) \leq 3\nu_t(G)$. A famous conjecture of Tuza states:
\begin{conjecture}[Tuza~\cite{tuza90}]
\label{conj:tuza}
For any graph $G$, $\tau_t(G) \le 2\nu_t(G)$.
\end{conjecture}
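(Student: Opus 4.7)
The plan is to reformulate Tuza's conjecture in hypergraph language and then attempt standard matching/covering techniques. Define the $3$-uniform hypergraph $H$ whose vertex set is $E(G)$ and whose hyperedges are all triples of graph-edges forming a triangle of $G$; then $\nu_t(G)=\nu(H)$ and $\tau_t(G)=\tau(H)$, so the conjecture becomes the purely hypergraph statement $\tau(H)\le 2\nu(H)$. This $H$ carries strong structural information that I would try to exploit: any two vertices of $H$ (two edges of $G$) lie in at most one common hyperedge, and the link of a vertex $e=uv$ is naturally indexed by the common neighbourhood $N(u)\cap N(v)$ in $G$.

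First I would pass to the fractional relaxation. Since $\tau^*(H)=\nu^*(H)$ by LP duality, and it is already known (Krivelevich) that $\tau_t^*(G)\le 2\nu_t(G)$, the fractional version of the conjecture holds. The remaining challenge is to close the integrality gap between $\tau$ and $\tau^*$, which is the heart of the problem.

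To bound this gap I would attempt a local-extremal induction. Fix a maximum set $M$ of edge-disjoint triangles, of size $\nu_t(G)$. By maximality, every triangle of $G$ shares at least one edge with some $T\in M$, so the subgraph $G_M=\bigcup_{T\in M} T$ already carries a cover of all triangles. The goal is then to choose, for each $T\in M$, a subset of its three edges of average size at most $2$ so that the union is a global cover. I would try to set up, for each $T$, an auxiliary local problem on the triangles sharing exactly one edge with $T$, and then solve these local problems simultaneously by a careful greedy or exchange argument on the overlap pattern between neighbouring triangles of $M$.

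The main obstacle is the global accounting: the extremal examples $K_4$ and $K_5$ attain the ratio $\tau_t/\nu_t=2$ exactly, leaving essentially no slack, so any local trade-off that spends more than two cover-edges per matched triangle in one region must be repaid elsewhere. This tightness is precisely why Tuza's conjecture has remained open for three decades; the current best general bound, Haxell's $\tau_t\le (3-\tfrac{3}{23})\nu_t$, already requires a delicate weight-shifting analysis and still falls short of $2$. A full proof likely demands either a global potential function or a structural decomposition of triangle-critical graphs, and this is the step where I expect the plan to founder.
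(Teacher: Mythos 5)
There is nothing to compare here: the statement you were asked about is Tuza's conjecture itself, which is an open problem. The paper does not prove it --- it states it as Conjecture~\ref{conj:tuza} and only proves weaker or fractional relatives (bounds on $h^*$, $g$, $j^*$, etc.). Your write-up is a research plan, not a proof, and you say so yourself in the last paragraph; taken as a proof attempt it has a genuine and unclosed gap.

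Concretely, the gap is exactly where you locate it. The reduction to the $3$-graph $H$ of triangles and the observation that $\tau^*(H)=\nu^*(H)$, combined with Krivelevich's $\tau^*_t(G)\le 2\nu_t(G)$, are correct but only give the fractional statement; they say nothing about the integrality gap of the covering LP, which can in general be as large as a $(1+\ln r)$ factor for $r$-uniform hypergraphs. Your proposed remedy --- fix a maximum triangle packing $M$ and select, per $T\in M$, on average at most two of its three edges so that the union covers all triangles --- is the step that fails: a single $T\in M$ can have triangles hanging off each of its three edges with pairwise incompatible demands, so locally three edges may be forced, and the ``repayment elsewhere'' you invoke has no mechanism behind it (no potential function, no charging scheme, no exchange lemma is specified). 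The extremal examples $K_4$ and $K_5$ show there is zero slack for such a local argument, and the best that delicate global weight-shifting has achieved is Haxell's $\tau_t\le \frac{66}{23}\nu_t$. So the proposal correctly frames the problem and correctly predicts its own failure point, but it does not prove the statement, and neither does the paper.
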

The inequality is seen to be sharp by taking $G$ to be $K_4$ or $K_5$ (or a disjoint union of these), and is close to sharp in other cases (see e.g.~\cite{bk16, hkt12}). The best known general bound is $\tau_t(G) \leq \frac{66}{23}\nu_t(G),$ due to Haxell~\cite{haxell99}.

Note that if $T(G)$ is the hypergraph whose edges are triples of vertices forming a triangle in $G$, then  $\tau_t(G) = \tau^{(2)}(T(G))$ and $\nu_t(G) = \nu^{(2)}(T(G))$, and thus Tuza's  conjecture states that $ \nu^{(2)}(T(G))\le  2\tau^{(2)}(T(G))$ for every graph $G$.
In this paper we are interested in a generalization of Conjecture~\ref{conj:tuza} proposed by Aharoni and Zerbib~\cite{az20}. They conjectured the same inequality holds for any 3-graph.
\begin{conjecture}[Aharoni-Zerbib~{\cite[Conjecture~1.2]{az20}}]
\label{conj:generaltuza3}
For any 3-graph $H$, $$\tau^{(2)}(H) \le 2\nu^{(2)}(H).$$
\end{conjecture}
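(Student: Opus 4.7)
The plan is to attempt a proof via a maximum-matching / local-exchange argument, in the spirit of Tuza's original approach. Let $M$ be a maximum 2-matching in the 3-graph $H$. Since $M$ is maximum, every edge $e \in H \setminus M$ must share at least two vertices with some $f \in M$; otherwise $M \cup \{e\}$ would be a larger 2-matching. Hence every edge of $H$ contains a pair from $\bigcup_{f \in M}\binom{f}{2}$, giving the trivial bound $\tau^{(2)}(H) \le 3\nu^{(2)}(H)$. To obtain the conjectured factor of $2$, the goal is to select, for each $f \in M$, only $2$ of the $3$ pairs in $\binom{f}{2}$ while keeping the union a 2-cover.

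For each $f \in M$ and each pair $p \in \binom{f}{2}$, I would call $p$ \emph{critical for $f$} if there exists $e \in H \setminus M$ with $e \cap f = p$ such that $e$ is not covered by any pair in $\binom{g}{2}$ for $g \in M \setminus \{f\}$. If some $f \in M$ has a non-critical pair $p$, then $p$ can safely be dropped from the candidate cover. The main combinatorial reduction is therefore to the case in which every $f \in M$ has at least one critical pair; for each such $f$, I would set up a bipartite auxiliary ``conflict'' graph between $M$ and the critical-witness edges in $H \setminus M$, and attempt a defect-Hall / discharging argument to show that a consistent choice of one droppable pair per $f$ always exists. For edges $f$ whose three pairs are all critical the standard move is an augmenting swap: replace $f$ by one of its witness edges $e_p$ in $M$ and argue via some monovariant that the number of triply-critical edges strictly decreases, so the process terminates.

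The principal obstacle lies precisely in this swap-and-discharge step, which is why Tuza's original conjecture for graphs has remained open since 1990. Local configurations modelled on $K_4$ and its 3-graph analogues can force every pair of $f$ to be critical, and a naive swap can produce an equally bad configuration elsewhere, so the monovariant must be chosen with care. Haxell's $\tfrac{66}{23}$ bound~\cite{haxell99} achieves such a global argument via a fractional relaxation plus rounding, and the present paper's bound $\tau^{*(2)}(H) \le \tfrac{21}{10}\,\nu^{(2)}(H)$ (the $r=3$ instance of the main theorem) already beats $2.11$ on the fractional side. A realistic route to the full conjecture is therefore an integer-rounding lemma that closes the integrality gap for hypergraphs of the form $H^{(2)}$, combining the fractional cover produced in the present paper with a deterministic rounding scheme tailored to the special structure of pair-hypergraphs arising from $3$-uniform $H$; shaving the residual gap from $2.1$ down to $2$ is where I expect the real difficulty to concentrate.
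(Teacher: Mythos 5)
The statement you are addressing is a \emph{conjecture} (Aharoni--Zerbib, generalizing Tuza), and the paper does not prove it; no comparison with ``the paper's proof'' is possible because none exists. Your submission is, by its own admission, a research plan rather than a proof: every step that would actually close the argument --- the defect-Hall/discharging argument for choosing a droppable pair per matching edge, the monovariant controlling the augmenting swaps for triply-critical edges, and the integer-rounding lemma for pair-hypergraphs $H^{(2)}$ --- is named but not supplied. The opening reductions you do carry out (every edge meets some $f\in M$ in at least two vertices, hence $\tau^{(2)}(H)\le 3\nu^{(2)}(H)$) are correct but are exactly the trivial bound; the entire content of the conjecture is the passage from $3$ to $2$, and that is the part left open.

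One factual correction worth making: the $r=3$ instance of the paper's fractional bound is not $\tfrac{21}{10}$. Theorem~\ref{thm:h-r-1} applies only for $r\ge 5$; for $r=3$ the paper proves the sharp statement $h^*(3,2)=2$ (Theorem~\ref{thm:h-r-1-4}), i.e., $\tau^{*(2)}(H)\le 2\nu^{(2)}(H)$ already holds. So the outstanding difficulty is not ``shaving $2.1$ down to $2$'' on the fractional side but closing the integrality gap between $\tau^{*(2)}$ and $\tau^{(2)}$ --- and since the extremal examples ($K_4$, $K_5$ viewed as $3$-graphs) have $\tau^{(2)}=2\nu^{(2)}=\tau^{*(2)} \cdot (\text{something} \ge 1)$, a generic rounding lemma with loss factor $1$ cannot exist for arbitrary hypergraphs; any rounding scheme must exploit the special structure of $H^{(2)}$ for $3$-uniform $H$. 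If you want to pursue this, I would recommend framing it as an open problem and being explicit that the local-exchange machinery of Section~\ref{sec:g-and-h} (Lemmas~\ref{lem:(r-1)friend} and~\ref{lem:m1edges}) yields fractional covers only.
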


They also conjectured that a similar phenomenon should hold much more generally.
\begin{conjecture}[Aharoni-Zerbib~{\cite[Conjecture~1.10]{az20}}]
\label{conj:generaltuzar}
Fix $r \geq 3$. For any $r$-graph $H$, \[ \tau^{(r-1)}(H) \leq \ceil{\frac{r+1}{2}}\nu^{(r-1)}(H).\]

\end{conjecture}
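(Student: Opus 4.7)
Conjecture~\ref{conj:generaltuzar} is open; at $r=3$ it specializes to Conjecture~\ref{conj:generaltuza3}, which in turn contains Tuza's 1990 conjecture (as the case $H=T(G)$). I therefore propose a program, and indicate where I expect it to become genuinely hard.

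\textbf{Step 1: Fix a maximum $(r-1)$-matching.} Let $M\subseteq H$ realize $\nu^{(r-1)}(H)$. By maximality, every $e\in H\setminus M$ satisfies $|e\cap m|=r-1$ for some $m\in M$, for otherwise $M\cup\{e\}$ would still be an $(r-1)$-matching. Hence each such $e$ is already hit by the $(r-1)$-set $e\cap m\in\binom{m}{r-1}$, and the trivial cover $\bigcup_{m\in M}\binom{m}{r-1}$, of size $r|M|$, shows $\tau^{(r-1)}(H)\le r\nu^{(r-1)}(H)$. The goal is to cut this down to $\lceil(r+1)/2\rceil|M|$, i.e.\ to retain slightly more than half of the $(r-1)$-subsets of each $m\in M$.

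\textbf{Step 2: Block selection, via the fractional problem.} Set $B_m:=\binom{m}{r-1}$ and, for each $e\in H\setminus M$, let $A_e:=\{(m,e\cap m):m\in M,\;|e\cap m|=r-1\}$. We want sets $S_m\subseteq B_m$ with $|S_m|\le\lceil(r+1)/2\rceil$ whose union meets every $A_e$. The plan is to first attack the fractional statement $\tau^{*(r-1)}(H)\le\lceil(r+1)/2\rceil\nu^{(r-1)}(H)$ by LP duality: an extremal fractional matching of $H^{(r-1)}$ is supported on edges of $H$ and constrained at each $(r-1)$-subset of $V(H)$, and the block decomposition $\{B_m\}_{m\in M}$ should let one double-count contributions within each $m$ more tightly than the paper's main-theorem bound of roughly $\tfrac{3}{4}r$ allows. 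For the integrality step one would apply an Aharoni--Berger--Ziv or Haxell-type topological matching lemma to the blocks $B_m$ together with the system $\{A_e\}$, since the ratio $\lceil(r+1)/2\rceil/r>\tfrac{1}{2}$ is precisely the regime in which such arguments typically succeed.

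\textbf{Main obstacle.} The entire difficulty lives in Step 2. At $r=3$ this program reduces to Tuza's conjecture, whose fractional version is known but whose integer version has resisted proof for three decades (the best general bound being $66/23>2$, due to Haxell). Consequently any argument that treats the blocks $B_m$ independently is bound to fail. Realistic progress seems to require either a stability theorem ruling out near-extremal $K_4/K_5$-type local configurations, or a genuinely new topological ingredient tailored to the cross-block incidence system $\{A_e\}$; I view the design of one of these as the heart of any plausible proof.
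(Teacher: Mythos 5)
This statement is a conjecture, not a theorem: the paper does not prove it, and no proof is known. It generalizes Tuza's 1990 conjecture (the case $r=3$ restricted to hypergraphs of the form $T(G)$), and the paper's actual results on the case $m=r-1$ are strictly weaker in two respects --- they bound the \emph{fractional} cover number $\tau^{*(r-1)}$ rather than $\tau^{(r-1)}$, and only by roughly $\tfrac{3}{4}r\,\nu^{(r-1)}$ rather than $\lceil\tfrac{r+1}{2}\rceil\nu^{(r-1)}$. You correctly recognize all of this and present a program rather than a proof, so there is no claim to verify; but to be explicit about where your program stands relative to the paper: your Step 1 is sound and is exactly the paper's starting point in Section 2 (fix a maximum $(r-1)$-matching $M$, classify the remaining edges by which $m\in M$ they meet in $r-1$ vertices, and try to select few $(r-1)$-sets per block $\binom{m}{r-1}$ plus some auxiliary sets). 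The trivial cover of size $r|M|$ is correct.

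The gap is, as you say, entirely in Step 2, and it is worth being precise about why neither half of it is within reach by the tools you name. First, the fractional statement $\tau^{*(r-1)}(H)\le\lceil\tfrac{r+1}{2}\rceil\nu^{(r-1)}(H)$ is itself open for every $r\ge 3$ except $r=3$ (where it is Krivelevich-type and is Theorem~\ref{thm:h-r-1-4}); the paper's main effort (Lemmas~\ref{lem:(r-1)friend}--\ref{lem:m1edges} and the weight assignments in the proof of Theorem~\ref{thm:h-r-1}) is needed just to push the constant from $r-1$ down to about $\tfrac{3}{4}r$, and the obstruction is the $(r+1)$-clique example $\binom{[r+1]}{r}$ interacting with bad type-2 edges across blocks, which your "double-count within each $m$" heuristic does not address. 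Second, the integrality step is not a regime where Haxell- or Aharoni--Berger--Ziv-type arguments are known to apply: those results round at a multiplicative loss (typically a factor of $2$ or $r-1$), not at a loss factor tending to $1$, and any generic lemma converting a fractional $(r-1)$-cover into an integral one at ratio $\lceil\tfrac{r+1}{2}\rceil/\tfrac{r+1}{2}=1$ (for $r$ odd) would at $r=3$ convert the known fractional Tuza bound into the full Tuza conjecture. So your own "main obstacle" paragraph is the accurate assessment; the proposal should not be read as progress toward a proof.
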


Conjecture~\ref{conj:generaltuzar} is based on a more general conjecture involving functions $h$ and $g$, defined ahead, which states that $h(r, m) = g(r, m)$ for all $r, m$ along with the observation that, for an $r$-graph $H$, if $\nu^{(r-1)}(H) = 1$ then $\tau^{(r-1)} \leq \ceil{\frac{r+1}{2}}$.

 Let $\fancyH_r$ be the family of all $r$-graphs, for some $r\ge 3$. For an integer $2\le m\le r$, we will be interested in the following functions (first defined in \cite{az20}):
\begin{itemize}
    \item $\displaystyle h(r, m) = \sup\left\{ \frac{\tau^{(m)}(H)}{\nu^{(m)}(H)} : H \in \fancyH_r \right\}$,
    \item $\displaystyle g(r, m) = \sup\left\{\tau^{(m)}(H) : H \in \fancyH_r \mbox{ and } \nu^{(m)}(H) = 1 \right\}$,
    \item $\displaystyle h^*(r, m) = \sup\left\{ \frac{\tau^{*(m)}(H)}{\nu^{(m)}(H)} : H \in \fancyH_r \right\}$,
    \item $\displaystyle g^*(r, m) = \sup\left\{\tau^{*(m)}(H) : H \in \fancyH_r \mbox{ and } \nu^{(m)}(H) = 1 \right\}$,
    \item $\displaystyle j^*(r, m) = \sup\left\{ \frac{\tau^{(m)}(H)}{\nu^{*(m)}(H)} : H \in \fancyH_r \right\}.$
\end{itemize}

The following proposition is easily proved from the definitions:
\begin{proposition} For any integers $2\le m\le r$, the following holds.
\begin{enumerate}[(a)]
    \item $g^*(r, m) \le g(r, m) \le h(r, m)$, 
    \item $h^*(r, m) \le h(r, m)$,
    \item $j^*(r, m) \le h(r, m)$.
\end{enumerate}
\end{proposition}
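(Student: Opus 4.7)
The plan is to reduce all three inequalities directly to the chain
\[\nu^{(m)}(H) \le \nu^{*(m)}(H) = \tau^{*(m)}(H) \le \tau^{(m)}(H) \le \binom{r}{m}\nu^{(m)}(H)\]
that was just displayed before the statement. Each part is essentially a one-line manipulation: either dividing through by an appropriate quantity and then taking the supremum, or restricting the supremum to a smaller class of hypergraphs.

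For part (a), I would handle the two inequalities separately. For $g^*(r,m)\le g(r,m)$, I take any $H\in\fancyH_r$ with $\nu^{(m)}(H)=1$; the chain above gives $\tau^{*(m)}(H)\le \tau^{(m)}(H)$, and then taking the sup over such $H$ on both sides yields the claim. For $g(r,m)\le h(r,m)$, I note that when $\nu^{(m)}(H)=1$ we have $\tau^{(m)}(H)=\tau^{(m)}(H)/\nu^{(m)}(H)$, and the sup of this ratio over the restricted family $\{H:\nu^{(m)}(H)=1\}$ is certainly bounded by the sup over all of $\fancyH_r$, which is $h(r,m)$.

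For part (b), the chain gives $\tau^{*(m)}(H)\le \tau^{(m)}(H)$ for every $H\in\fancyH_r$, so dividing by $\nu^{(m)}(H)$ and taking the supremum over $H$ gives $h^*(r,m)\le h(r,m)$. For part (c), I use the other side of the chain: since $\nu^{*(m)}(H)\ge \nu^{(m)}(H)$ for every $H\in\fancyH_r$, we have $\tau^{(m)}(H)/\nu^{*(m)}(H)\le \tau^{(m)}(H)/\nu^{(m)}(H)$, and taking the sup yields $j^*(r,m)\le h(r,m)$.

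Since each step is a direct consequence of the displayed inequality and the monotonicity of the supremum, there is no real obstacle; the only care needed is to make sure the supremum manipulations are written cleanly (in particular, for $g(r,m)\le h(r,m)$, to observe explicitly that restricting to the subfamily $\{H:\nu^{(m)}(H)=1\}$ can only decrease the supremum of the ratio $\tau^{(m)}/\nu^{(m)}$).
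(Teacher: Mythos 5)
Your proof is correct and is precisely the routine verification the paper intends: the paper gives no proof at all, remarking only that the proposition ``is easily proved from the definitions,'' and your reduction of each part to the displayed chain $\nu^{(m)}\le\nu^{*(m)}=\tau^{*(m)}\le\tau^{(m)}$ together with monotonicity of suprema is the standard argument. Nothing further is needed.
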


Given a graph $G$, let $H(G, r)$ be the $r$-graph whose vertex set is $V(G)$ and whose edges are cliques of size $r$ in $G$. Let $\fancyG_r \subseteq \fancyH_r$ be the family of all $r$-graphs obtained in this manner.
For each of the functions defined above, we add $\circ$ in the subscript to denote the function where $\fancyH_r$ is replaced by $\fancyG_r$. For example \[ \displaystyle g_\circ(r, m) = \sup\left\{\tau^{(m)}(H) : H \in \fancyG_r \mbox{ and } \nu^{(m)}(H) = 1 \right\}. \]
Observe that $f_\circ(r, m) \leq f(r, m)$, where $f$ is any  of the functions $h,g,h^*,g^*,j^*$.

Using the notation  above,  Conjectures \ref{conj:tuza}, \ref{conj:generaltuza3} and \ref{conj:generaltuzar} can be stated as: 
\begin{conjecture}\label{conj:sum}
\hfill
\begin{enumerate}[(a)]
    \item $h_\circ(3, 2) \leq 2$ {\em (Tuza)},
    \item $h(3, 2) \leq 2$ {\em (Aharoni-Zerbib)},
    \item $h(r, r-1) \leq \ceil{\frac{r+1}{2}}$ {\em  for $r\ge 3$ (Aharoni-Zerbib)}.
\end{enumerate}
\end{conjecture}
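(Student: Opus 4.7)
The plan is to recognize that Conjecture~\ref{conj:sum} is a pure notational reformulation of the three previously stated conjectures, so the only task is to unfold the definitions of $h$, $h_\circ$, $H(G,r)$, $\fancyG_r$, $\nu^{(m)}$, and $\tau^{(m)}$; the underlying conjectures themselves remain open (with Haxell's $\frac{66}{23}$ being the best known bound toward~(a)), so I am not attempting a real proof but rather verifying the translation.

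Parts (b) and (c) are immediate from the definition of the supremum: $h(r,m) \le c$ is equivalent to $\tau^{(m)}(H) \le c\,\nu^{(m)}(H)$ holding for every $r$-graph $H$, using $\nu^{(m)}(H) \ge 1$ whenever $H$ is non-empty and noting that the empty hypergraph is trivial. Applying this with $(r,m,c)=(3,2,2)$ gives that (b) coincides with Conjecture~\ref{conj:generaltuza3}, and applying it with $m=r-1$ and $c=\ceil{\frac{r+1}{2}}$ gives that (c) coincides with Conjecture~\ref{conj:generaltuzar}.

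For (a), take any graph $G$ and set $H = H(G,3) \in \fancyG_3$, whose vertex set is $V(G)$ and whose edges are the vertex-triples of the triangles of $G$. A $2$-matching in $H$ is a family of triangles pairwise sharing fewer than two vertices, i.e., pairwise edge-disjoint triangles, so $\nu^{(2)}(H) = \nu_t(G)$. For the covering number, any pair $\{u,v\}$ with $uv \notin E(G)$ lies in no triangle of $G$ and so can be deleted from any $2$-cover of $H$ without loss; restricting attention to pairs in $E(G)$ identifies $\tau^{(2)}(H) = \tau_t(G)$. Taking the supremum over graphs $G$ then translates Tuza's Conjecture~\ref{conj:tuza} into $h_\circ(3,2) \le 2$.

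The one mildly non-routine point is the identification $\tau^{(2)}(H(G,3)) = \tau_t(G)$, which rests on the non-edge observation above. I do not anticipate any substantive obstacle, since the whole verification is bookkeeping and the paper will presumably assert this equivalence without further comment.
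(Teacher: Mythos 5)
Your proposal is correct and matches the paper's treatment: the statement is a conjecture, not a theorem, and the paper offers no proof beyond asserting that it is a restatement of Conjectures~\ref{conj:tuza}, \ref{conj:generaltuza3} and \ref{conj:generaltuzar} in the notation of $h$ and $h_\circ$ (the identification $\tau_t(G)=\tau^{(2)}(T(G))$, $\nu_t(G)=\nu^{(2)}(T(G))$ is already noted in the paper). Your careful unfolding of the definitions, including the observation that a pair which is a non-edge of $G$ covers no triangle and may be discarded from any $2$-cover, is exactly the bookkeeping the paper leaves implicit.
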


In this paper we determine, or give bounds, on the values of the functions $h$, $g$, $h^*$, $g^*$, $j^*, h_{\circ}$, $g_{\circ}$, $h^*_{\circ}$, $g^*_\circ$, $j^*_\circ$,    for certain sets of parameters $r,m$.

\subsection{Our results and  organization of the paper.} In Section~\ref{sec:g-and-h} we focus on the case $m=r-1$. 
That is, we consider the functions $g(r,r-1)$, $h(r,r-1)$, and   their  variants defined above. 
 In \cite{krivelevich95} Krivelevich proved that $h^*_\circ(3,2) \leq 2$.
Aharoni and Zerbib~\cite{az20} generalized this to $r$-graphs, and proved for any $r\ge 3$, \begin{equation}\label{fracrAZ}
  h^*(r,r-1) \leq r-1.
\end{equation}   
This follows from the observation that if $H$ is an $r$-graph then  $H^{(r-1)}$ cannot contain the $r$-uniform  projective plane $\mathbb{P}_r$,  and a theorem of F\"{u}redi~\cite{furedi81} stating that if an $r$-graph $H$ does not contain $\mathbb{P}_r$, then $\tau^*(H) \leq (r-1)\nu(H)$. For an introduction to projective planes, see e.g.~\cite{stinson2007}.

Here we improve the bound (\ref{fracrAZ}) for all $r \geq 4$. Moreover, in the case $r=3$ we give a new (and shorter) constructive proof of (\ref{fracrAZ}), which does not use F\"{u}redi's theorem.

\begin{theorem}
\label{thm:h-r-1-4}
 $h^*(3, 2) = 2$, and $h^*(4, 3) \le \frac{8}{3}$.
\end{theorem}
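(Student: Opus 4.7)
The plan is to pair a sharp example with a direct constructive upper bound for $h^*(3,2)=2$, and to prove $h^*(4,3)\le 8/3$ by the analogous but more delicate construction.

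For the lower bound $h^*(3,2)\ge 2$, the natural candidate is $H=K_4^{(3)}$, the complete $3$-graph on $\{1,2,3,4\}$.  Any two of its four edges share exactly two vertices, so $\nu^{(2)}(H)=1$, while $w(\{1,2\})=w(\{3,4\})=1$ is a $2$-cover of size $2$ and $s(e)=1/2$ for each of the four edges is a fractional $2$-matching of size $2$, so by LP duality $\tau^{*(2)}(H)=\nu^{*(2)}(H)=2$.

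For the upper bound $\tau^{*(2)}(H)\le 2\nu^{(2)}(H)$ I would fix a maximum $2$-matching $M=\{e_1,\dots,e_k\}$ of $H$; since $M$ is maximal, every edge of $H$ either lies in $M$ or shares at least two vertices with some $e_i$.  The aim is to build a fractional $2$-cover $w$ by attaching weight at most $2$ to each $e_i$.  Call a pair $p\subset e_i$ \emph{critical for $e_i$} if there is an edge $f\in H\setminus M$ with $f\cap e_i=p$ and $|f\cap e_j|<2$ for every $j\ne i$; such $f$ cannot be covered by the weight attached to any other $e_j$, so the contribution at $e_i$ must carry weight at least $1$ on $p$.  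When at most two pairs of $e_i$ are critical, placing weight $1$ on each critical pair (and on one pair of $e_i$ if no pair is critical, in order to cover $e_i$ itself) contributes at most $2$.  The key structural claim is that if all three pairs of $e_i=a_ib_ic_i$ are critical then a short swap argument, leveraging the maximality of $M$, forces the three witnesses to share a common outside vertex $x$, so $H$ contains the copy of $K_4^{(3)}$ on $\{a_i,b_i,c_i,x\}$; that $K_4^{(3)}$ can then be covered by $w(\{a_i,b_i\})=w(\{c_i,x\})=1$ with total local weight $2$.  Summing over $M$ yields $|w|\le 2k$, as required.

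For $h^*(4,3)\le 8/3$, I would follow the same scheme with a maximum $3$-matching $M=\{e_1,\dots,e_k\}$ in the $4$-graph $H$, assigning weights to the $\binom{4}{3}=4$ triples of each $e_i$ together with certain external triples in the analogue of the $K_4^{(3)}$ case.  A case analysis on the number of critical triples of $e_i=a_ib_ic_id_i$---those triples $T\subset e_i$ appearing as $f\cap e_i$ for some $f\in H\setminus M$ with $|f\cap e_j|<3$ for every $j\ne i$---together with swap arguments, identifies the extremal sub-hypergraphs of $H$ (playing the role that $K_4^{(3)}$ does above) into which $e_i$ and its critical witnesses must fit, and in each case the local contribution can be pushed down to $8/3$.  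The main obstacle is exactly this case analysis for $r=4$: since the number of critical triples of a single $e_i$ can be as large as $4$ and the corresponding extremal configurations are more varied than in the $r=3$ case, the bulk of the technical work will be in verifying that each configuration admits a local fractional $3$-cover of weight at most $8/3$, rather than the naive $3$ coming from putting weight $1$ on three of the four triples.
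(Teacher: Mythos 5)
Your lower bound for $h^*(3,2)$ and your treatment of type-1 edges are fine (your ``critical pairs'' are the paper's indispensable pairs, and your structural claim that three critical pairs force a common outside vertex is the paper's Lemma~\ref{lem:(r-1)friend}). But there is a genuine gap: your fractional cover only places weight on critical pairs (plus the external pair in the $K_4^{(3)}$ case), and this fails to cover edges that meet \emph{two or more} matching edges in $r-1$ vertices each. For $r=3$ such edges exist whenever two matching edges share a vertex, e.g.\ $M\supseteq\{123,145\}$ and $f=124$ with $f\cap 123=12$, $f\cap 145=14$. The pair $12$ is critical only if some \emph{other} edge $g$ has $g\cap 123=12$ and meets every other matching edge in at most one vertex; $f$ itself is not such a witness, and neither $12$ nor $14$ need be critical, so $f$ can receive total weight $0$ under your scheme. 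Your remark that an uncovered $f$ ``must'' force weight $1$ on $p=f\cap e_i$ only applies to type-1 edges; it says nothing about these multiply-intersecting edges, and patching the scheme by putting weight $1$ on every such pair can push the local budget at a single matching edge above $2$ (a triple has only three pairs, and all three may need attention).

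This is exactly why the paper's cover puts a \emph{baseline} fractional weight ($1/2$, $2/3$, or $1/3$, depending on how many indispensable pairs $e_i$ has) on every $(r-1)$-subset of every matching edge, so that an edge meeting two matching edges in $r-1$ vertices accumulates weight from both sides; the residual weight is then spent on covering the type-1 edges. Making the totals come out to $2$ (resp.\ $8/3$) requires the additional structural facts that no such ``connecting'' edge joins two matching edges that are both heavily indispensable, and that the connecting edges attached to a given matching edge all pass through a common $(r-1)$-set (Lemmas~\ref{lem:specialfriend}--\ref{lem:m1edges} in the paper). None of this appears in your proposal, and for $r=4$ the proposal is in any case only a plan; the same omission of type-$\ge 2$ edges would recur there.
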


\begin{theorem}\label{thm:h-r-1}
    For every $r \geq 5$,
\[ h^*(r, r-1) \le \begin{cases} 
\frac{3}{4}r - \frac{r}{4(r+1)} &\text{for }r\text{ even,}\\
\frac{3}{4}r - \frac{r}{4(r+2)} &\text{for }r\text{ odd.} \\
\end{cases} \]
\end{theorem}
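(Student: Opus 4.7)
The plan is to apply LP duality, which gives $\tau^{*(r-1)}(H) = \nu^{*(r-1)}(H)$, and then to exhibit an explicit fractional $(r-1)$-cover $t$ of $H$ whose total weight is at most $c \cdot \nu^{(r-1)}(H)$ for the claimed constant $c$. We start from a fixed maximum $(r-1)$-matching $M$ of $H$ with $|M| = \nu := \nu^{(r-1)}(H)$. By the maximality of $M$, every edge $e \in H \setminus M$ shares exactly $r-1$ vertices with at least one $e' \in M$.

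There are two structural observations I would rely on. First, $H^{(r-1)}$ is \emph{linear}: two distinct $r$-subsets of $V(H)$ share at most one $(r-1)$-subset, so two distinct edges of $H^{(r-1)}$ share at most one vertex. Second, a short inclusion-exclusion argument shows that if some $e \in H \setminus M$ shares $r-1$ vertices with two distinct $e',e'' \in M$, then necessarily $|e' \cap e''| = r-2$ and $e \subset e' \cup e''$; such an $(r+2)$-set $e' \cup e''$ admits at most four ``hybrid'' edges of the form $(e' \cap e'') \cup \{v,u\}$ with $v \in e' \setminus e''$ and $u \in e'' \setminus e'$. This rigidity is what should power the improvement over the bound $r-1$.

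I would then build $t$ with two layers of weight: a uniform weight $\alpha$ on each $(r-1)$-subset contained in some $e' \in M$, and a supplementary weight $\beta$ on the $(r-1)$-subsets arising in the hybrid configurations above. Coverage of a matching edge $e' \in M$ reduces to $r\alpha \ge 1$, while coverage of $e \in H \setminus M$ splits into cases by how many matching edges $e$ meets in $r-1$ vertices. In the single-neighbour case the $(r-1)$-subsets of $e$ outside $e'$ can be assisted by $\beta$-weight coming from nearby matching pairs, so one can push $\alpha$ strictly below $1$; in the multi-neighbour (hybrid) cases the $\beta$-layer itself carries the covering. The total weight is of the form $r\alpha\nu + \beta \cdot (\text{number of close pairs in } M)$, and linearity of $H^{(r-1)}$ lets us bound this close-pair count linearly in $\nu$.

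The main obstacle will be simultaneously tuning $\alpha$ and $\beta$ so that every coverage constraint (for $e' \in M$, for single-neighbour $e$, and for hybrid $e$) is satisfied while the total weight meets the target. The worst case is expected to come from matching-edge configurations that cluster tightly, pairwise sharing $r-2$ vertices, and the exact coefficients $\frac{r}{4(r+1)}$ (respectively $\frac{r}{4(r+2)}$) should emerge from an integer-choice step in the optimization over the cluster size, with the parity distinction in the theorem's statement reflecting whether the extremal cluster uses $\lceil r/2 \rceil$ or $\lfloor r/2 \rfloor$ matching edges.
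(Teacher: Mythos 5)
Your overall framework --- fix a maximum $(r-1)$-matching $M$, put a base weight $\alpha$ on every $(r-1)$-subset of every matching edge, and add a supplementary layer --- is the same skeleton the paper uses (with $\alpha=\frac{r+2}{2(r+1)}$ or $\frac{r+3}{2(r+2)}$). But there is a genuine gap: your supplementary layer is aimed at the wrong edges. The ``hybrid'' edges $e\subset e'\cup e''$ (type-2 edges in the paper's terminology) contain \emph{two} $(r-1)$-subsets lying inside matching edges, namely $(e'\cap e'')\cup\{v\}\subset e'$ and $(e'\cap e'')\cup\{u\}\subset e''$, so they already receive weight $2\alpha\ge 1$ from the base layer whenever $\alpha\ge 1/2$; they are essentially free. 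The hard case is the single-neighbour edges $f$ with $|f\cap e'|=r-1$ for exactly one $e'\in M$: these receive only $\alpha<1$ from the base layer, and your proposal to ``assist'' them with $\beta$-weight from nearby close pairs of matching edges cannot work, because such an $f$ need not be anywhere near a pair of matching edges sharing $r-2$ vertices (its external vertex can be arbitrary). Covering these edges is where all the work lies. The paper does it via two structural facts you do not have: any two single-neighbour edges at the same $e'\in M$ must intersect in at least $r-1$ vertices (else $M$ is not maximum), and consequently the family $T_1(e')$ either has a common $(r-1)$-set or a common external vertex $v(e')$; in the first regime one extra $(r-1)$-set per pair of ``directions'' suffices (giving the $\lceil i/2\rceil$ savings and, ultimately, the parity split in the constant --- not a cluster-size optimization as you guess), while in the second regime weight $\frac{\alpha}{r-1}$ is spread over the $(r-1)$-sets through $v(e')$, and the base weight on $e'$ can then be lowered to $1-\alpha$.

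Two further problems. First, your accounting charges $\beta$ per close pair of matching edges, and the number of pairs $e',e''\in M$ with $|e'\cap e''|=r-2$ can be quadratic in $|M|$ (e.g.\ all matching edges containing a common $(r-2)$-set), so ``linearity of $H^{(r-1)}$'' does not bound this count linearly; the paper avoids this by charging every gram of weight to a single matching edge via $t=\sum_{e\in M}t_e$. Second, once some matching edges carry reduced base weight $1-\alpha<1/2$ (as they must, to beat $\frac34 r$), the type-2 edges adjacent to them are no longer automatically covered; handling these ``bad'' type-2 edges requires the exchange arguments of Lemma~\ref{lem:specialfriend} and Corollaries~\ref{co:notype2M34}--\ref{co:bad4}, which show such edges are confined to a small number of $(r-1)$-sets. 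None of this is recoverable from the ingredients you list, so the proposal as written does not yield the stated bound.
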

For $r \geq 6$, it is also possible to further improve Theorem~\ref{thm:h-r-1}, but the optimization is much more involved, and leads to a negligible gain for large $r$. For example, when $r = 6$, it is possible to obtain an upper bound of $17/4$ (vs. $30/7$ implied by Theorem~\ref{thm:h-r-1}). %In general, we suspect that the lower bound gives the correct value of $h^*(r, r-1)$.

In Section~\ref{sec:(4,2)}, we study the case $(r, m) = (4, 2)$. Some bounds for these parameters and certain families of 4-graphs $H \in \fancyG_4$ are given in Szestopalow{~\cite[Chapter 5]{szestopalow16}}, e.g., when the corresponding graph $G$ is the complete graph, for 4-partite graphs, and planar graphs. In~\cite{az20} it was shown that $g(4,2) = 4$. Completing the picture, we prove 
\begin{theorem}
\label{prop:g(4,2)}\hfill
\begin{enumerate}[(a)]
    \item $g^*_\circ(4, 2) = 2.5$,
    \item $\displaystyle g_\circ(4, 2) = 3$,
    \item $g^*(4, 2) = 3.5$.
\end{enumerate}
\end{theorem}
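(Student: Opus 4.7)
For the lower bounds in (a) and (b), I would take $G = K_6$: then $H(K_6, 4) = \binom{[6]}{4}$ has $15$ edges, any two of which meet in at least $4 + 4 - 6 = 2$ vertices, so $\nu^{(2)}(H(K_6, 4)) = 1$. By $S_6$-symmetry the uniform fractional $2$-matching $s(e) = 1/6$ is optimal, giving $\nu^{*(2)} = 15/6 = 5/2$; and since each pair covers exactly $6$ of the $15$ edges, two pairs cover at most $12 < 15$, so $\tau^{(2)} \ge 3$, realized by three disjoint pairs such as $\{1,2\}, \{3,4\}, \{5,6\}$. For (c), I would take $H$ to be the $4$-graph on $[7]$ whose edges are the complements of the lines of the Fano plane: any two edges meet in exactly $2$ vertices (so $\nu^{(2)}(H) = 1$), each pair of $[7]$ is in exactly two edges, and uniform weight $s(e) = 1/2$ yields a fractional $2$-matching of total $7/2$.

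For the upper bounds in (a) and (b), my plan is to prove a structural lemma: if $G$ is a graph with $\nu^{(2)}(H(G,4)) = 1$, then either (i) some pair lies in every $K_4$ of $G$, or (ii) every $K_4$ of $G$ is a $4$-subset of some $K_6$-subgraph of $G$. Starting from two $K_4$'s $A, B$ with $|A \cap B| \in \{2, 3\}$, any further $K_4$ $C$ must $2$-intersect both, and a case analysis on $|A \cap B|$ and $|C \cap (A \cup B)|$ should force either a common pair across the three $K_4$'s or the completion of the missing edges so that $A \cup B$ induces a $K_6$; once a $K_6$ is present, any $K_4$ using a vertex outside it would fail to $2$-intersect some $4$-subset of the $K_6$, so all $K_4$'s are contained in the $K_6$. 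Case (i) immediately gives $\tau^{(2)}(H(G,4)) \le 1$, and case (ii) reduces to the explicit $K_6$ computation, yielding $\tau^{(2)} \le 3$ and $\tau^{*(2)} \le 5/2$.

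For (c), I would argue by cases on $H$. If some pair lies in every edge, then $\tau^{(2)}(H) \le 1$. If some vertex $v$ lies in every edge but no common pair exists, then $H' := \{e \setminus \{v\} : e \in H\}$ is an intersecting $3$-uniform hypergraph (since $|e_1 \cap e_2| \ge 2$ and $v \in e_1 \cap e_2$ give $|(e_1 \cap e_2) \setminus \{v\}| \ge 1$); any fractional cover $t$ of $H'$ lifts to a fractional $2$-cover $t'$ of $H$ by setting $t'(\{v,u\}) := t(u)$ and $t'(P) := 0$ for pairs $P \not\ni v$, and F\"uredi's theorem combined with direct analysis of the Fano subcase gives $\tau^{*(2)}(H) \le \tau^*(H') \le 7/3 < 7/2$. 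The hard part will be the remaining case, where no vertex lies in every edge: no such reduction is available, and a naive application of F\"uredi's theorem to the intersecting $6$-graph $H^{(2)}$ yields only $\tau^{*(2)} \le 5$. My plan is an LP-duality argument: summing the fractional matching constraints $\sum_{e \supseteq P} s(e) \le 1$ over all pairs $P$ gives $6 \sum_{e \in H} s(e) \le |\{P : P \subseteq e \text{ for some } e \in H\}|$, and the $2$-intersecting and no-common-vertex conditions must then be used in concert to bound this right-hand side, matching the Fano-complement extremum at $7/2$.
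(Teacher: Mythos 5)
Your lower-bound constructions are exactly the paper's: $\binom{[6]}{4}$ for (a) and (b), and the complement of the Fano plane (with the uniform fractional matching of size $7/2$) for (c). Those parts are correct. Both upper-bound plans, however, have genuine gaps.

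For (a) and (b), the structural dichotomy you propose is false. Let $G$ be the graph on $[7]$ whose edge set is the union of the cliques on $\{1,2,3,4,5\}$, on $\{1,2,5,6\}$ and on $\{1,3,5,7\}$. Its $K_4$'s are the five $4$-subsets of $\{1,\dots,5\}$ together with $1256$ and $1357$; any two of these share at least two vertices, so $\nu^{(2)}(H(G,4))=1$. Yet no pair lies in all of $1234$, $1256$, $1357$, and $G$ contains no $K_6$ (vertices $6$ and $7$ have degree $3$). So neither alternative of your lemma holds, and the case analysis you describe cannot terminate as claimed: with $A=1234$, $B=1256$, $C=1357$ there is no common pair and $A\cup B$ does not become a $K_6$. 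The paper's route is to fix one edge $e$, classify its \emph{indispensable pairs} (pairs $p\in\binom{e}{2}$ of the form $f\cap e$ with $|f\cap e|=2$), and observe via Corollary~\ref{co:(4,2)disjoint} that two \emph{disjoint} indispensable pairs do force the $K_6$ configuration; the real work is the remaining cases where the indispensable pairs pairwise intersect (a triangle or a star inside $e$), each of which needs a hand-built fractional cover of size at most $2.5$.

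For (c), the case you yourself flag as hard is where all the content lies, and the LP step you sketch cannot close it. Summing the matching constraints over pairs gives $\nu^{*(2)}(H)\le \frac{1}{6}\bigl|\{P: P\subseteq e \text{ for some } e\in H\}\bigr|$, so your argument needs every $2$-intersecting $4$-graph with no common vertex to cover at most $21$ pairs. That fails: $H=\{1234,1256,1278,1357,2357,1239\}$ is $2$-intersecting, has no vertex in every edge, and its edges cover $22$ pairs, so this bound only yields $22/6>3.5$ (the true $\tau^{*(2)}$ of this $H$ is much smaller, but the counting argument cannot detect that). The paper's proof of the upper bound $3.5$ proceeds instead by an explicit case analysis on how edges meet a fixed edge $e=1234$ and a triple-intersecting edge $f=1235$, exhibiting a fractional cover of size at most $3.5$ in each configuration; no soft averaging argument of the kind you propose is available as a substitute.
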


Observe, in particular, that $g(4,2)\neq g_\circ(4,2)$, unlike the case $r=3$ where it is known that $g(3,2)= g_\circ(3,2)$ (see \cite{az20})  and  conjectured that $h(3,2)= h_\circ(3,2)$.

In \cite{szestopalow17} it was proved that $h_\circ^*(4, 2) \leq 4.5$ (the more general bound  $h^*(4, 2) \leq 4.5$ was obtained in \cite{az20}).
We improve this bound:
\begin{theorem}
\label{thm:hcirc-4-2}
$h^*_\circ(4, 2) \leq 4$
\end{theorem}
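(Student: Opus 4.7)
The plan is to construct, from a maximum $2$-matching $M$ of $H$, a fractional $2$-cover $t\colon \binom{V(G)}{2}\to \RRplusnull$ with $|t| = \sum_p t(p) \le 4|M|$, which gives $\tau^{*(2)}(H) \le 4\nu^{(2)}(H)$. Setting $\nu := |M|$, I would use two structural facts about $M$ at the outset: (a) by maximality of $M$, every $4$-clique $K'$ of $G$ either lies in $M$ or satisfies $|K' \cap V(K)| \ge 2$ for some $K \in M$, and (b) since distinct members of $M$ share at most one vertex, every pair $p \in \binom{V(G)}{2}$ is contained in $V(K)$ for at most one $K \in M$.

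My candidate cover has two parts. First I would place the \emph{base weight} $t_0(p) = 1/2$ on every pair $p$ lying in some $V(K)$, $K \in M$; this totals $3\nu$ and assigns to each $4$-clique $K'$ the score $\tfrac{1}{2}\sum_{K \in M}\binom{|K' \cap V(K)|}{2}$, which is at least $1$ unless $K'$ is \emph{deficient}, meaning $|K' \cap V(K)| = 2$ for a unique $K \in M$ and $|K' \cap V(K'')| \le 1$ for every other $K'' \in M$. Second, for each $K \in M$ I would add a corrective weighting $\Delta t_K$, of total weight at most $1$, designed to cover precisely the deficient cliques anchored at $K$. Two complementary strategies appear: (I) when few pairs of $V(K)$ serve as deficient anchors, boost $t_0$ by $1/2$ on each such pair so that its total weight is $1$; (II) when many pairs of $V(K)$ are anchors — as in the extreme example $G = K_6$ with $\nu = 1$ and every internal pair an anchor — transfer weight instead to \emph{external} pairs $\{e,f\}\subset V(G)\setminus V(M)$ that lie in several deficient cliques simultaneously, exploiting their shared support. (For $G = K_6$, putting weight $1/6$ on all fifteen pairs covers every $4$-clique with weight exactly $1$ and totals $2.5 \le 4$.)

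The hard part will be showing that the corrective weight at each $K \in M$ can always be kept to at most $1$, so that $|t| = 3\nu + \sum_{K \in M}|\Delta t_K| \le 4\nu$. To do this I plan to phrase it as a local LP per $K$ whose variables are the extra weights on pairs inside the closed neighborhood of $V(K)$ and whose constraints are indexed by the deficient cliques anchored at $K$. The key lemma I would aim for is that the optimum of each such local LP is at most $1$; my approach is an augmenting-swap argument using maximality of $M$. Concretely, if a deficient $K' = \{a,b,e,f\}$ anchored at $\{a,b\}\subset V(K)$ and a deficient $K'' = \{c,d,g,h\}$ anchored at a pair $\{c,d\}\subset V(K)$ disjoint from $\{a,b\}$ satisfy $|K' \cap K''| \le 1$, then $M \setminus \{K\} \cup \{K',K''\}$ is a $2$-matching of size $\nu+1$, a contradiction. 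The main technical obstacle will be handling the case where every candidate swap fails because the external pairs of the deficient cliques overlap (e.g., two deficient cliques sharing the external pair $\{e,f\}$); there I would exploit the fact that the resulting $K_6$-like configuration admits an efficient external-pair weighting as in strategy (II), whose cost per $K$ is demonstrably at most $1$ by a direct case analysis of the geometry of $V(K)\cup\{e,f\}$ in $G$.
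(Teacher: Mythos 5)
Your overall strategy coincides with the paper's: put weight $1/2$ on every pair inside a matching edge (total $3\nu$), observe that only the ``deficient'' cliques --- what the paper calls type-1 edges --- remain uncovered, and then argue that each $e\in M$ needs at most $1$ additional unit of weight to cover $T_1(e)$. Your augmenting-swap observation (two deficient cliques anchored at disjoint pairs of the same $K$ and meeting in at most one vertex would extend the matching) is exactly the paper's Observation~3.1, and your $K_6$ remark is its Corollary~3.2.

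The gap is that the decisive lemma --- that the corrective weight at each $K\in M$ can always be kept to at most $1$ --- is only announced, not proved, and it is where essentially all of the content of the theorem lives. Pairwise intersection of the deficient cliques in $\ge 2$ vertices does not by itself yield a local LP value of at most $1$; one must show the stronger structural fact that there are \emph{two specific pairs} such that every edge of $T_1(e)$ contains one of them. The paper establishes this by a case analysis that repeatedly invokes the hypothesis $H\in\fancyG_4$: e.g., after normalizing witnesses to $1256$, $1278$, $1367$, it rules out the pairs $57$ and $68$ appearing in any edge because otherwise a $K_4$ such as $G[2567]$ would produce an edge of $H$ meeting every member of $M$ in at most one vertex, contradicting maximality; this pins down the remaining witnesses uniquely and forces all of $T_1(e)$ onto two pairs (such as $12$ and $36$). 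Your proposal gestures at ``a direct case analysis of the geometry of $V(K)\cup\{e,f\}$ in $G$'' but does not carry it out, and without it the bound $|\Delta t_K|\le 1$ is unsubstantiated --- indeed, for general $4$-graphs (no clique structure) the analogous claim is not known, which is why the best general bound is $4.5$ rather than $4$. To complete the argument you would need to supply that case analysis, or an equivalent one.
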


Finally, in Section~\ref{sec:j*}, we turn our attention to the function $j^*$. In~\cite{az20}, it was shown that $j^*(3, 2) = 2$ and $j^*(4, 2) \leq 4$. Guruswami and Sandeep~\cite{gs20} made significant progress, proving that $j^*(r, 2) \leq r^2/4$ and $j^*(r, r-1) \leq r/2 + \sqrt{2 r \ln r}$. For the more general case, they give the bound $j^*(r, m) = c(m) \binom{r}{m}$ where $c(m) \rightarrow 1/2 + o(1)$ as $m \rightarrow r-1$. In their paper, they also observed a connection to the hypergraph Tur\'an problem. Here we exploit this connection more explicitly and bound $j^*(r,m)$ in terms of hypergraph Tur\'an numbers.

Let $\ex_r(n, k)$ be the maximum number of edges in an $r$-graph on $n$ vertices that does not contain a copy of $K_{k}^r$, the complete $r$-graph on $k$ vertices. Tur\'an~\cite{turan41} determined the value of $\ex_2(n, k)$ and posed the problem of determining the limit
\[ \pi(k, r) = \lim_{n \rightarrow \infty} \frac{\ex_r(n, k)}{\binom{n}{r}}, \]
for $2 < r < k$. This has proven to be a notoriously hard problem and even the first non-trivial case $r = 3$ and $k = 4$ remains open. For a survey of the problem and related results see~\cite{keevash11, mps11}.

\begin{theorem}
\label{thm:jrm}
$j^*(r,m) \leq \ex_m(r, m+1)$.
\end{theorem}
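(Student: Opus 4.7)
By the LP-duality equality $\tau^{*(m)}(H) = \nu^{*(m)}(H)$, I would work with an optimal fractional $m$-cover $t^*$ of $H$ satisfying $|t^*| = \nu^{*(m)}(H)$; set $K = \ex_m(r, m+1)$. The goal is to produce an integer $m$-cover $C$ with $|C| \leq K|t^*|$. The natural candidate is \emph{threshold rounding}: define
\[
C := \{S \in \tbinom{V(H)}{m} : t^*(S) \geq 1/K\}.
\]
Then $|C| \cdot (1/K) \leq \sum_{S \in C} t^*(S) \leq |t^*|$, giving $|C| \leq K|t^*|$ automatically, so the content is in verifying that $C$ is an $m$-cover.

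I would argue this by contradiction. If $C$ fails to cover some $e \in H$, then every $m$-subset of $e$ has $t^*$-weight strictly less than $1/K$. Combined with the constraint $\sum_{S \in \binom{e}{m}} t^*(S) \geq 1$, this forces the number of $m$-subsets of $e$ with $t^*(S) > 0$ to exceed $K = \ex_m(r, m+1)$. By the definition of the Tur\'an number, these positive-weight $m$-subsets of $e$ must then contain all the $m$-subsets of some $(m+1)$-element set $T \subseteq e$; that is, every $S \in \binom{T}{m}$ satisfies $t^*(S) > 0$.

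The crux, and the main obstacle, is turning this structural consequence into a contradiction, since the presence of $\binom{T}{m}$ in the support does not by itself violate any cover constraint. My plan is to refine the choice of $t^*$: among all optimal fractional $m$-covers, I would pick one minimizing $|\{S : t^*(S) > 0\}|$. Using the Tur\'an-supplied $T$, I would then exhibit a weight-transfer on $\binom{T}{m}$ that zeroes out some $t^*(S_0)$ with $S_0 \in \binom{T}{m}$, keeps $|t^*|$ unchanged, and preserves every cover constraint. The constraint at $e$ is automatic because the transfer lives inside $\binom{e}{m}$; for any edge $e' \supseteq T$ the transfer is neutral by choosing a direction whose coordinates on $\binom{T}{m}$ sum to zero; and for $e'$ with $|e' \cap T| = m$ (so that $\binom{e'}{m}$ contains only one $m$-subset of $T$), the direction can be chosen to avoid decreasing that single contribution beyond the available slack --- this is feasible because the space of weight-shifts on $\binom{T}{m}$ that are balanced and supported in the positive directions is $m$-dimensional, while the tight-constraint directions to avoid form a proper linear subspace. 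The outcome is an optimal fractional cover with strictly smaller support, contradicting the minimality of $t^*$ and yielding the desired bound $\tau^{(m)}(H) \leq K \cdot \nu^{*(m)}(H)$.
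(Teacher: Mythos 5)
There is a genuine gap, and it sits exactly where you flag "the crux": the support-reducing perturbation need not exist. After fixing a minimal-support optimal fractional cover $t^*$ and locating the $(m+1)$-set $T\subseteq e$ with $\binom{T}{m}$ in the support, the constraints your weight-shift $\delta:\binom{T}{m}\to\RR$ must respect are not "a proper linear subspace of directions to avoid" but coordinate-wise inequalities: for every edge $e'$ with $|e'\cap T|=m$ whose cover constraint is tight, you need $\delta(e'\cap T)\ge 0$. If every $S\in\binom{T}{m}$ occurs as $e'\cap T$ for some tight edge $e'$ (nothing in your argument rules this out), then these $m+1$ sign constraints together with the balance condition $\sum_S\delta(S)=0$ force $\delta=0$, and no perturbation exists. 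Even when a nonzero admissible direction does exist, moving along it may make a new constraint tight before any coordinate of $t^*$ reaches zero, so optimality and feasibility are preserved but the support does not shrink; the induction on support size does not close. In short, it is not established (and I do not believe it is true in general) that some optimal fractional $m$-cover survives threshold rounding at $1/\ex_m(r,m+1)$.

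The paper's proof diverges precisely at this point: rather than trying to perturb the clique $\binom{T}{m}$ out of the support, it embraces the cliques. It takes a minimal counterexample $H$; if some $m$-set $u$ has weight $g(u)\ge 1/\ex_m(r,m+1)$ it deletes all edges through $u$ and applies minimality (a global reduction, not a per-edge rounding); otherwise every edge's support contains a copy of $K_{m+1}^m$, exactly as you derive. The new ingredient is then a \emph{global} covering step: the support $U$ has size $\binom{r}{m}\nu^{*(m)}(H)$ by complementary slackness, and the probabilistic constructions of Section~4.1 produce a $K_{m+1}^m$-cover of $U$ of size about $\tilde t(m)|U|$ with $\tilde t(m)\le 1/2\le 1-\tilde t(m)\le \pi(m+1,m)$, so this cover is an $m$-cover of $H$ of size at most $\ex_m(r,m+1)\nu^{*(m)}(H)$. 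That covering mechanism (Lemmas~\ref{lem:cover43}--\ref{lem:hypergrahcover} and Corollary~\ref{tbound}) is the real content of the theorem and has no counterpart in your proposal; to salvage your approach you would need to either prove the structural claim about minimal-support optimal covers or replace the perturbation step with something like the paper's clique-cover argument.
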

When $m = r-1$, we obtain the bound of $j^*(r, r-1) \leq r-1$ (this bound also appears in~\cite{gs20}). For $m = 2$, Theorem~\ref{thm:jrm}, together with Mantel's Theorem~\cite{mantel1907}, recovers the bounds $j^*(3, 2) \leq 2$, $j^*(4, 2) \leq 4$ and $j^*(r, 2) \leq \frac{r^2}{4}$. More generally, Theorem~\ref{thm:jrm} together with known results about hypergraph Tur\'an numbers can be used to obtain explicit values. For example, paired with results of Chung and Lu~\cite{cl99}, Markstr\"om~\cite{markstrom09} and  Sidorenko~\cite{sidorenko82}, respectively, we obtain
\begin{itemize}
    \item $\displaystyle \lim_{r \rightarrow \infty} \frac{j^*(r,3)}{\binom{r}{3}} \leq \frac{3 + \sqrt{17}}{12} \approx 0.5936.$
    \item $\displaystyle \lim_{r \rightarrow \infty} \frac{j^*(r,4)}{\binom{r}{4}} \leq \frac{1753}{2380} \approx 0.73655.$
\item $\displaystyle \lim_{r \rightarrow \infty} \frac{j^*(r,m)}{\binom{r}{m}} \leq 1 - \frac{1}{m}$.
\end{itemize}

\section{\texorpdfstring{The case $m=r-1$}{The case m = r - 1}}
\label{sec:g-and-h}

\subsection{Preliminaries}

Let $H \in \fancyH_r$ be an $r$-graph, and $M$ be a maximum $(r-1)$-matching in $H$. We say an edge $e$ in $H$ is of {\em type}-$i$ (with respect to $M$), for $1 \leq i \leq r$, if $e$ intersects exactly $i$ edges of $M$ in $r-1$ vertices each, and $e$ intersects every other edge of $M$ in at most $r-2$ vertices. Let $T_i \subseteq H$ denote the set of edges of type $i$. For $e \in M$, let $H(e) = \{f \in H: |f \cap e| \geq r-1\}$,   $T_i(e) = T_i \cap H(e)$ and $H_i(e)=T_i(e)\cup \{e\}$. For $x\in \binom{e}{r-1}$, if there exists an edge $f\in T_1(e)$ such that $f\cap e=x$, then we say that $x$ is an \textit{indispensable $(r-1)$-set}. When $r = 4$, then we refer to an indispensable $3$-set as an {\em indispensable triple}.

\begin{observation}
\label{obs:friends}
Let $H \in \fancyH_r$, and let $M$ be a maximum $(r-1)$-matching in $H$ with $e\in M$, and let $f,g \in T_1(e)$. Then $|f \cap g| \ge r-1$. In other words, $\nu^{(r-1)}(H_1(e))=1$. 
\end{observation}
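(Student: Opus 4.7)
The plan is to prove this by a standard augmentation/exchange argument. Suppose toward contradiction that there exist $f,g \in T_1(e)$ with $|f\cap g|\le r-2$. The natural move is to delete $e$ from $M$ and try to insert both $f$ and $g$, forming $M' := (M\setminus\{e\})\cup\{f,g\}$. If $M'$ turns out to be an $(r-1)$-matching, then $|M'|=|M|+1$ contradicts the choice of $M$ as a maximum $(r-1)$-matching.

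So the work is in checking every pair of edges in $M'$ intersects in at most $r-2$ vertices. There are four kinds of pairs to consider: (i) $f$ and $g$, which is handled by the contradiction hypothesis; (ii) $f$ and some $e'\in M\setminus\{e\}$; (iii) $g$ and some $e'\in M\setminus\{e\}$; (iv) two edges of $M\setminus\{e\}$. Cases (ii) and (iii) are where the type-$1$ definition is used: by definition, an edge in $T_1(e)$ meets $e$ in exactly $r-1$ vertices and meets every other edge of $M$ in at most $r-2$ vertices, so the required bound is immediate. Case (iv) is free since $M$ was already an $(r-1)$-matching. This completes the contradiction and establishes $|f\cap g|\ge r-1$.

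For the reformulation $\nu^{(r-1)}(H_1(e))=1$: since $H_1(e)=T_1(e)\cup\{e\}$ contains $e$, we certainly have $\nu^{(r-1)}(H_1(e))\ge 1$. For the matching equality (upper bound), any two distinct edges in $H_1(e)$ are either of the form $(e,f)$ with $f\in T_1(e)$, in which case $|e\cap f|\ge r-1$ by the definition of $H(e)$, or of the form $(f,g)$ with $f,g\in T_1(e)$, in which case $|f\cap g|\ge r-1$ by the first part. In either case the pair cannot coexist in an $(r-1)$-matching, so no $(r-1)$-matching in $H_1(e)$ has size greater than $1$.

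There is no serious obstacle here; the content of the argument is essentially bookkeeping against the definition of type $1$. The only thing to be careful about is that both $f$ and $g$ are simultaneously type $1$ with respect to the \emph{same} distinguished edge $e$, so that the bound $|f\cap e'|\le r-2$ for $e'\in M\setminus\{e\}$ applies to both of them and the augmentation $M'$ remains a valid $(r-1)$-matching after the single deletion of $e$.
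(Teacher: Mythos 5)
Your proof is correct and is essentially the paper's argument: assume $|f\cap g|\le r-2$ and observe that $(M\setminus\{e\})\cup\{f,g\}$ would be a larger $(r-1)$-matching, the key point being that a type-$1$ edge in $T_1(e)$ meets every edge of $M$ other than $e$ in at most $r-2$ vertices. The paper states this exchange in one line; you have merely spelled out the pair-by-pair verification and the easy reformulation for $H_1(e)$.
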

\begin{proof}
Assume to the contrary that  $|f \cap g| < r-1$. Then 
$\left(M \setminus \{ e \}\right) \cup \{f, g\}$ is an $(r-1)$-matching of size greater than $|M|$, a contradiction.
\end{proof}

The following lemma gives a structural characterization of $H_1(e)$. 
\begin{lemma}
\label{lem:(r-1)friend}
For $H \in \fancyH_r$,  let $M$ be  a maximum $(r-1)$-matching in $H$. Let  $e \in M$ be an edge such that $T_1(e)\neq \emptyset$. Then one of the following holds:
\begin{enumerate}[(a)]
    \item $\bigcap H_1(e) = r - 1$. If this is true, we set $p(e) = \bigcap H_1(e)$.
    \item There exists a vertex $v \in V(H) \setminus e$ such that $v \in f$ for each $f \in T_1(e)$. If such a vertex exists, we denote it by $v(e)$.
\end{enumerate}
\end{lemma}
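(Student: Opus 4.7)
The plan is to exploit the fact that each $f\in T_1(e)$ is determined by two pieces of data: the unique vertex $a(f)\in e\setminus f$ that it omits from $e$, and the unique vertex $v(f)\in f\setminus e$ that it adds. Write $f=(e\setminus\{a(f)\})\cup\{v(f)\}$. Observation~\ref{obs:friends} will be the main engine: for any $f,g\in T_1(e)$ we must have $|f\cap g|\ge r-1$, and this strongly constrains the pairs $(a(f),v(f))$.

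I would do a dichotomy based on the set $A:=\{a(f):f\in T_1(e)\}\subseteq e$. First, if $|A|=1$, say $A=\{a\}$, then every $f\in T_1(e)$ contains the common $(r-1)$-set $e\setminus\{a\}$, so $\bigcap H_1(e)\supseteq e\setminus\{a\}$. On the other hand, since $e\in H_1(e)$ we have $\bigcap H_1(e)\subseteq e$, and since $T_1(e)\neq\emptyset$ this containment is strict, so $|\bigcap H_1(e)|\le r-1$. Hence $|\bigcap H_1(e)|=r-1$ and we are in case (a).

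Second, suppose $|A|\ge 2$, so fix $f,g\in T_1(e)$ with $a(f)\ne a(g)$. Then $f\cap g$ already contains $e\setminus\{a(f),a(g)\}$, a set of size $r-2$. By Observation~\ref{obs:friends} there must be one more vertex of agreement, and the only candidates are $v(f)$ and $v(g)$ (since $a(f)\notin f$ and $a(g)\notin g$), forcing $v(f)=v(g)=:v\notin e$. I now claim this $v$ lies in every $f'\in T_1(e)$: if $a(f')\ne a(f)$, apply the same pairing argument to $f,f'$ to get $v(f')=v(f)=v$; otherwise $a(f')=a(f)\ne a(g)$, so apply the argument to $g,f'$ to get $v(f')=v(g)=v$. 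Either way $v\in f'$, giving case (b).

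The argument is essentially mechanical once the parametrization $f\leftrightarrow(a(f),v(f))$ is set up; the only real care is to notice that in case (a) one must rule out $|\bigcap H_1(e)|=r$, which follows immediately from $T_1(e)\neq\emptyset$, and in case (b) one must cover the sub-case $a(f')=a(f)$ by switching the roles of $f$ and $g$. I do not expect any serious obstacle beyond clean bookkeeping of these sub-cases.
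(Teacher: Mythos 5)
Your proof is correct and follows essentially the same route as the paper's: both hinge on Observation~\ref{obs:friends} forcing either a common trace on $e$ (case (a)) or a common external vertex (case (b)), with your dichotomy on $A=\{a(f)\}$ being just a cleaner global packaging of the paper's case split on $|f\cap g\cap e|$ for a fixed pair. Your explicit role-switching step for $a(f')=a(f)$ in fact states more precisely what the paper asserts somewhat loosely ("both $|f\cap h\cap e|<r-1$ and $|g\cap h\cap e|<r-1$", where "at least one" is what is actually needed and used).
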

\begin{proof}
We may assume $|T_1(e)| > 1$, otherwise the claim is trivial. 
For an edge $f\in T_1(e)$ let $f = x \cup \{v_f\}$,  where $v_f \notin e$.
Let $f, g$ be distinct edges in $T_1(e)$. By Observation~\ref{obs:friends}, if $Y = f \cap g$ then $|Y| = r-1$. 

Assume first $|Y \cap e| = r-1$. Let $h \in T_1(e)$, and assume $h$ does not contain $Y$. Then  $|h \cap f| =r-1$ and $|h \cap g| =r-1$ implies $\{v_f,v_g\} \subset h$, implying $|h \cap e| \le  r-2$, a contradiction.

Otherwise we have $|Y \cap e| < r-1$. Then $|f\cap g|=r-1$ implies  $v_f = v_g$. Similarly, every edge $h \in T_1(e)$ has both $|f\cap h \cap e| < r-1$ and $|g\cap h \cap e| < r-1$, and thus $v_h=v_f$. 
\end{proof}

\begin{remark}
\label{rmk:(r-1)friendexactly1}
If $|T_1(e)| > 1$, then exactly one of (a) or (b) holds.
If (b) holds then $|T_1(e)|$ equals the number of indispensable $(r-1)$-sets (namely, every indispensable $(r-1)$-set belongs to exactly one edge in $T_1(e)$),  and  moreover $|e\cap f \cap g|=r-2$ for distinct edges $f,g \in T_1(e)$. 
\end{remark}

Before proving Theorem~\ref{thm:h-r-1}, we first give the following weaker bound as a warm-up. When $r = 2$, the following proposition recovers the optimal bound $h^*(2, 1) = 1.5$ (see e.g.~\cite{lovasz74}).
\begin{proposition}
\label{prop:h-r-1-weak}
For any $r \geq 2$, \[ \displaystyle \frac{r+1}{2} \leq h^*(r, r-1) \leq \frac{3}{4}r. \]
\end{proposition}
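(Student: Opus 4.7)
The plan is to treat the two inequalities separately. For the lower bound $(r+1)/2 \leq h^*(r, r-1)$, I would take $H = K_{r+1}^r$, the complete $r$-graph on $r+1$ vertices. Any two edges meet in $r-1$ vertices, so $\nu^{(r-1)}(H) = 1$. The derived hypergraph $H^{(r-1)}$ is then $r$-uniform and $2$-regular, since every $(r-1)$-subset of $[r+1]$ extends in exactly two ways to an $r$-subset. Setting $s(f) = 1/2$ on every edge of $H^{(r-1)}$ gives a fractional matching of size $(r+1)/2$, whence $\tau^{*(r-1)}(H) = \nu^{*(r-1)}(H) \geq (r+1)/2$ by LP duality.

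For the upper bound, fix an $H \in \fancyH_r$ and a maximum $(r-1)$-matching $M$ with $|M| = \nu$. By maximality, every $f \in H$ lies in $H(e)$ for some $e \in M$, so it suffices to build, for each $e \in M$, a nonnegative function $t_e$ on $\binom{V(H)}{r-1}$ with $|t_e| \leq 3r/4$ such that every $f \in H(e)$ receives total weight at least $1$ from $t_e$; the function $t = \sum_{e \in M} t_e$ will then be a fractional $(r-1)$-cover of $H$ of size at most $(3r/4)\nu$.

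The core idea is a two-layer weighting. The base layer puts weight $1/2$ on each of the $r$ sets in $\binom{e}{r-1}$, contributing $r/2$ to $|t_e|$. This base already covers $f = e$ itself, and after summing over $e \in M$ it covers every edge of type $\geq 2$ as well, since such an $f$ gains $1/2$ from each of at least two distinct $M$-edges and the associated $(r-1)$-subsets are distinct (two edges of $M$ cannot share an $(r-1)$-set). The only remaining edges are those in $T_1(e)$. Invoking Lemma~\ref{lem:(r-1)friend}: if $|T_1(e)| \leq 1$ or case (a) holds, I would add $1/2$ to the weight of $p(e)$ (or the unique $(r-1)$-subset used by $T_1(e)$); every $f \in T_1(e)$ contains $p(e)$, and the extra cost is only $1/2$, yielding $|t_e| \leq (r+1)/2 \leq 3r/4$ for $r \geq 2$.

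The delicate scenario is case (b), where the edges of $T_1(e)$ share a vertex $v(e) \notin e$ but no common $(r-1)$-subset of $e$. Here I would place, on top of the base, weight $\tfrac{1}{2(r-1)}$ on each of the $\binom{r}{2}$ sets $\{v(e)\} \cup S$ with $S \in \binom{e}{r-2}$. For any $f = (e \setminus \{v_f\}) \cup \{v(e)\} \in T_1(e)$, the $(r-1)$-subsets of $f$ of the form $\{v(e)\} \cup T$ are exactly those with $T \in \binom{e \setminus \{v_f\}}{r-2}$, and there are $r-1$ of them; they contribute $(r-1) \cdot \tfrac{1}{2(r-1)} = 1/2$, which combined with the $1/2$ from $e \setminus \{v_f\}$ in the base yields total weight $\geq 1$ on $f$. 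The added mass totals $\binom{r}{2} \cdot \tfrac{1}{2(r-1)} = r/4$, so $|t_e| \leq r/2 + r/4 = 3r/4$. The main obstacle is the combinatorial bookkeeping in this case: one must pin down, using Lemma~\ref{lem:(r-1)friend}, which $(r-1)$-subsets of $f$ contain $v(e)$ and verify that all of them lie in the weighted family. Once this is in place, summing $|t_e| \leq 3r/4$ over $e \in M$ gives $\tau^{*(r-1)}(H) \leq (3r/4)\nu$ as desired.
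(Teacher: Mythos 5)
Your proposal is correct and matches the paper's proof essentially step for step: the same lower-bound example $\binom{[r+1]}{r}$ with the weight-$1/2$ fractional matching, and the same two-layer cover (weight $1/2$ on $\binom{e}{r-1}$ for $e\in M$, plus either $1/2$ on the common $(r-1)$-set in case (a) or $\frac{1}{2(r-1)}$ on the $\binom{r}{2}$ sets through $v(e)$ in case (b) of Lemma~\ref{lem:(r-1)friend}). The accounting $r/2 + r/4 = 3r/4$ is exactly the paper's.
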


\begin{proof}
To see the lower bound, consider the hypergraph $H = \binom{[r+1]}{r}$. Clearly $\nu^{(r-1)}(H) = 1$. The function $t: \binom{V(H)}{r-1} \rightarrow \mathbb{R}$ with $t(x) = 1/r$ is a fractional $(r-1)$-cover of size $(r+1)/2$, and the function $s: H \rightarrow \mathbb{R}$ with $s(e) = 1/2$ is a fractional $(r-1)$-matching of size $(r+1)/2$. This implies $\tau^*(H) = \nu^*(H) = (r+1)/2$.

For the upper bound, let $H \in \fancyH_r$ be an $r$-graph with $\nu^{(r-1)}(H) = k$, and let $M$ be a maximum $(r-1)$-matching in $H$. Define the function $t: \binom{V(H)}{r-1}\to \mathbb{R}$ by $t(x) = 1/2$ if $x\in \{ \binom{e}{r-1} : e\in M\}$, and $t(x) = 0$ otherwise. Note that $t$ is a fractional $(r-1)$-cover of the type-$i$ edges for $i>1$ and the edges in $M$. It remains to cover type-1 edges, each of which already has weight $1/2$ by $t$.

Let $e\in M$. We define a function $t_e:\binom{V(H)}{r-1}\to \mathbb{R}$ as follows.  
 If $e$ satisfies Lemma~\ref{lem:(r-1)friend}(a), we set $t_e(p(e)) = 1/2$.  If $e$ satisfies Lemma~\ref{lem:(r-1)friend}(b), then we set $t(x) = \frac{1}{2(r-1)}$ for $(r-1)$-sets $x$ satisfying  $|x\cap e|=r-2$ and $v(e)\in x$. For any other $(r-1)$-set we set $t_e(x)=0$. 
 
It is now easy to check that the function $t+\sum_{e\in M} t_e$ is a fractional $(r-1)$-cover of size at most  $\frac{3}{4}r|M|$. 
\end{proof}

%%%%%%%%%%%%%%%

%\subsection{Proof of Theorem~\ref{thm:h43}}
\subsection{Proofs of Theorems~\ref{thm:h-r-1-4} and \ref{thm:h-r-1}}

Let $H \in \fancyH_r$ be an $r$-graph, and let $M$ be a maximum $(r-1)$-matching in $H$. For $e \in M$, recall that an indispensable $(r-1)$-set $x \in \binom{e}{r-1}$ is an $(r-1)$-set such that there exists a type-1 edge  $f \in H$ with $f \cap e = x$. We refer to $f$ as a {\em witness} of indispensability of $x$ in $e$. For $0\le i \le r$, let $M_i$ be the set of all edges in $M$ containing exactly $i$ indispensable $(r-1)$-sets. 
For $e,f\in M$ and a type-2 edge $h$, we say that $h$ {\em connects $e$ and $f$} if $|e\cap h|=|f\cap h|=r-1$ (namely, $e,f$ are the two edges in $M$ witnessing the fact that $h$ is type-2).  We define $M^+ = M_3$ if $r=3$, and $M^+ = M_{r-1} \cup M_r$ if $r\geq 4$. Let $M^- = M\setminus M^+$.

\begin{lemma}
\label{lem:specialfriend}
Let $e\in M^+$. Let $f \in M$ and suppose there exists an edge connecting $e$ and~$f$. Then there exists $g \in T_1(e)$ such that:
\begin{enumerate}[(a)]
\item $|g \cap h| < r-1$ for any $h$ connecting $e$ and $f$,
\item $|g \cap a| < r-1$ for each $a \in T_1(f)$.
%\item $e \cap f \nsubseteq g$,
\end{enumerate}
\end{lemma}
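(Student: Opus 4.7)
The plan is to exploit the structural rigidity forced by $e\in M^+$ in order to reduce conditions (a) and (b) to a short pigeonhole count on the $(r-1)$-subsets of $e$. Since $e\in M^+$ has at least $r-1$ indispensable $(r-1)$-sets (and exactly three when $r=3$), Lemma~\ref{lem:(r-1)friend}(a) is ruled out, as it produces only one indispensable set, and so Lemma~\ref{lem:(r-1)friend}(b) must hold. Thus there is a vertex $v(e)\notin e$ lying in every edge of $T_1(e)$, and by Remark~\ref{rmk:(r-1)friendexactly1} each $g\in T_1(e)$ has the form $g=x_g\cup\{v(e)\}$ with $x_g=g\cap e\in\binom{e}{r-1}$; moreover the map $g\mapsto x_g$ is a bijection onto the indispensable $(r-1)$-sets of $e$, so $|T_1(e)|\ge r-1$ for $r\ge 4$ and $|T_1(e)|=3$ for $r=3$.

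Next I would describe the local geometry between $e$ and $f$. The maximality of $M$ gives $|e\cap f|\le r-2$, while the hypothesis that some edge connects $e$ and $f$ forces $|e\cap f|=r-2$. Write $Z=e\cap f$, $e=Z\cup\{u_1,u_2\}$, and $f=Z\cup\{v_1,v_2\}$. A direct check then shows that every connecting edge is of the form $h_{ij}=Z\cup\{u_i,v_j\}$ with $i,j\in\{1,2\}$, so there are at most four of them, and they satisfy $h_{ij}\cap e=Z\cup\{u_i\}$ and $h_{ij}\setminus e=\{v_j\}$.

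The main obstacle is the next step: proving that $v(e)\notin f$. Suppose for contradiction that $v(e)\in\{v_1,v_2\}$, say $v(e)=v_1$. For any $g=x_g\cup\{v(e)\}\in T_1(e)$ one computes $|g\cap f|=|x_g\cap Z|+1$, which equals $r-1$ whenever $x_g\supseteq Z$; in that case $g$ would intersect both $e$ and $f$ in $r-1$ vertices, contradicting $g\in T_1(e)$. Hence $\{u_1,u_2\}\subseteq x_g$ for every $g\in T_1(e)$, leaving only $|Z|=r-2$ possible values of $x_g$. This contradicts $|T_1(e)|\ge r-1$ (respectively $|T_1(e)|=3$ when $r=3$).

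With $v(e)\notin e\cup f$ in hand, a short case check pins down the bad values of $x_g$. Because $v(e)\notin h_{ij}$, condition (a) fails for $g$ only when $x_g=Z\cup\{u_i\}$ for some $h_{ij}\in H$. For condition (b), an edge $a\in T_1(f)$ gives $|g\cap a|\ge r-1$ only when $|a\cap e|=r-2$ and $v(e)\in a$; unpacking the options for $a\cap f\in\binom{f}{r-1}$ together with $v(e)\notin e\cup f$ forces $a=Z\cup\{v_j,v(e)\}$, and then the condition $a\cap e\subseteq x_g$ again gives $x_g\in\{Z\cup\{u_1\},Z\cup\{u_2\}\}$. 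Hence all bad values of $x_g$ lie in a set of size at most two, while $|T_1(e)|$ supplies at least $r-1$ (or $3$ when $r=3$) distinct values in $\binom{e}{r-1}$, a set of size $r$. Pigeonhole then yields an $x_g$ avoiding both bad values, and the corresponding $g\in T_1(e)$ satisfies both (a) and (b).
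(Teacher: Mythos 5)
Your proof is correct and follows essentially the same route as the paper's: both arguments hinge on showing $v(e)\notin f$ (via the existence of a witness whose trace on $e$ contains $e\cap f$, which your counting of the $x_g$ values encodes) and then selecting a $g\in T_1(e)$ with $e\cap f\nsubseteq g\cap e$ by pigeonhole on the at least $\min(3,r-1)$ indispensable $(r-1)$-sets. The only difference is organizational: you classify the "bad" traces explicitly, while the paper picks the good witness first and verifies (a) and (b) directly.
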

\begin{proof}
Let $h$ be an edge connecting $e$ and $f$. Note first that $|e\cap f|=r-2$ and $e \cap f \subset h \subset e \cup f$. Since $e$ has at least $3$ indispensable $(r-1)$-sets at most two of which contain $e \cap f$, there exists an indispensable $(r-1)$-set $x$ in $e$ such that $e \cap f \nsubseteq x$. Let $g \in T_1(e)$ be a witness for $x$. Then $g$ satisfies the required properties. 
Properties (a) and (b) follow from the fact, proved ahead, that the vertex $v := g \setminus e$ is not contained in $f$ (here we slightly abuse notation and identify the singleton set with the vertex). Indeed $v \notin f$ together with $e \cap f \nsubseteq g$ and, for any $h'$ connecting $e$ and $f$, $h' \subseteq e \cup f$ implies that $|g \cap h'| \leq r-2$, so~(a) is satisfied.
We also have $|g \cap f| = r-3$ implying $|g \cap a| \leq r-2$ for every $a \in T_1(f)$, so (b) is satisfied.

To see that $v \notin f$, let $y$ be an indispensable $(r-1)$-set in $e$ such that $e \cap f \subset y$ and let $a \in T_1(e)$ be a witness for $y$. Such an $(r-1)$-set must exist since $e$ has at least $r-1$  indispensable $(r-1)$-sets at most $r-2$ of which do not contain $e \cap f$. Lemma~\ref{lem:(r-1)friend} applies and by Remark~\ref{rmk:(r-1)friendexactly1}, $a$ contains the vertex $v$. Now, if $v \in f$ then $|a \cap f| \geq r-1$, a contradiction.
\end{proof}

Almost immediately, we obtain the following.

\begin{corollary}
\label{co:notype2M34}
Let $e,f\in M^+$. Then there is no type-2 edge connecting $e$ and $f$. 
\end{corollary}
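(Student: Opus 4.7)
The plan is to argue by contradiction: assume a type-$2$ edge $h$ connects $e$ and $f$ with $e,f\in M^+$, and then enlarge $M$ by one, contradicting its maximality. The idea is to evict $e$ and $f$ from $M$ and replace them with three edges: $h$ itself, together with two carefully chosen type-$1$ witnesses, one "based" at $e$ and one "based" at $f$, chosen so that all pairwise intersections within the replacement triple stay below $r-1$.

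To produce these witnesses, I would apply Lemma~\ref{lem:specialfriend} twice. First, since $e\in M^+$ and $h$ connects $e$ to $f$, the lemma yields some $g\in T_1(e)$ with $|g\cap h|<r-1$ (this is (a) applied to the connecting edge $h$) and $|g\cap a|<r-1$ for every $a\in T_1(f)$ (this is (b)). Next, by symmetry, since $f\in M^+$ and $h$ also connects $f$ to $e$, the lemma applied with the roles of $e$ and $f$ swapped yields some $a\in T_1(f)$ with $|a\cap h|<r-1$. (Part (b) of the second application is not needed, because the disjointness of $g$ and $a$ already follows from the first application.)

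Now I would verify that $M' := (M\setminus\{e,f\})\cup\{g,a,h\}$ is an $(r-1)$-matching. The three new edges are pairwise distinct: $h$ intersects both $e$ and $f$ in $r-1$ vertices, while $g\in T_1(e)$ meets $f$ in at most $r-2$ vertices and $a\in T_1(f)$ meets $e$ in at most $r-2$ vertices, so no two of $g,a,h$ can coincide. The pairwise intersections $|g\cap h|$, $|a\cap h|$, and $|g\cap a|$ are all strictly less than $r-1$ by the two applications of Lemma~\ref{lem:specialfriend}. For any remaining edge $m\in M\setminus\{e,f\}$, the edges $g$ and $a$ are of type $1$ (hence meet $m$ in fewer than $r-1$ vertices) and $h$ is of type $2$ with $e,f$ being its two "witness" edges (so also $|h\cap m|<r-1$). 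Therefore $M'$ is an $(r-1)$-matching of size $|M|+1$, contradicting the maximality of $M$, and the corollary follows.

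I do not expect a real obstacle here: the argument is essentially a bookkeeping check after the two symmetric applications of Lemma~\ref{lem:specialfriend}. The only mildly delicate point is making sure the second application is legitimate, which it is because both (a) and (b) of Lemma~\ref{lem:specialfriend} are symmetric in their hypothesis — they require only that the "base" edge belongs to $M^+$ and that there is an edge connecting the base edge to the other one, both of which hold when the roles of $e$ and $f$ are swapped.
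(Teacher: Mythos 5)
Your proof is correct and is essentially the paper's argument: the paper likewise applies Lemma~\ref{lem:specialfriend} (once based at $e$ and once at $f$) to obtain $a\in T_1(e)$ and $b\in T_1(f)$ with all three pairwise intersections among $a$, $b$, and the connecting edge below $r-1$, and then replaces $\{e,f\}$ by $\{a,b,h\}$ to contradict maximality of $M$. Your extra bookkeeping about intersections with the remaining edges of $M$ is just a more explicit version of what the paper leaves implicit.
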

\begin{proof}
Suppose that $g$ connects $e$ and $f$. By Lemma~\ref{lem:specialfriend}, there exist edges $a \in T_1(e)$ and $b \in T_1(f)$ such that $|a \cap g| < r-1$, $|b \cap g| < r-1$, and $|a \cap b| < r-1$. It follows that $\left(M \setminus \{ e, f \}\right) \cup \{a, b, g\}$ is an $(r-1)$-matching of size greater than $|M|$, a contradiction.
\end{proof}

We call a type-2 edge {\em bad} if it connects an edge $e\in M^+$ and an edge $f\in M^-$. If a type-2 edge is not bad, then it is {\em good}. For an edge $e\in M_i$, let $B(e)$ be the set of all bad type-2 edges connecting $e$ to other edges in $M$.

\begin{corollary}
\label{co:friendbadedgeintersection}
Let $e \in M^-\setminus M_0$, and suppose $g \in B(e)$ connects $e$ and $f \in M+$. Then $|g \cap h| \geq r-1$ for every $h \in T_1(e)$.
\end{corollary}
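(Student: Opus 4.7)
The plan is to argue by contradiction via the standard exchange trick. Suppose, toward a contradiction, that some $h \in T_1(e)$ satisfies $|g \cap h| < r-1$. I will then enlarge $M$ to obtain an $(r-1)$-matching of size $|M|+1$, contradicting the maximality of $M$.

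First I would apply Lemma~\ref{lem:specialfriend} with the roles of $e$ and $f$ swapped. This is legitimate since $f \in M^+$, and the bad edge $g$ is a type-2 edge connecting $f$ and $e \in M$. The lemma then produces an edge $g' \in T_1(f)$ such that (i) $|g' \cap g| < r-1$, since $g$ itself connects $e$ and $f$, and (ii) $|g' \cap a| < r-1$ for every $a \in T_1(e)$.

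Next I would form $M' = (M \setminus \{e, f\}) \cup \{h, g', g\}$ and verify that $M'$ is an $(r-1)$-matching of size $|M|+1$. The three new edges pairwise intersect in fewer than $r-1$ vertices: $|h \cap g'| < r-1$ follows from (ii) with $a = h$; $|g' \cap g| < r-1$ is (i); and $|h \cap g| < r-1$ is the contradiction hypothesis. Moreover, each of $h$, $g'$, $g$ intersects every edge of $M \setminus \{e, f\}$ in at most $r-2$ vertices: $h \in T_1(e)$ meets only $e$ in $r-1$ vertices, $g' \in T_1(f)$ meets only $f$ in $r-1$ vertices, and the type-2 edge $g$ meets only $e$ and $f$ in $r-1$ vertices. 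Hence $|M'| = |M|+1$, the desired contradiction.

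There is no real obstacle here; the content is essentially a one-sided analogue of Corollary~\ref{co:notype2M34}, where only one endpoint $f$ of the connecting edge is required to lie in $M^+$. The only point requiring care is checking that the candidate replacement $h$ on the $e$-side does not clash with the canonical replacement $g'$ supplied by Lemma~\ref{lem:specialfriend} on the $f$-side, and this is precisely what clause (ii) of that lemma guarantees. The hypothesis $e \in M^- \setminus M_0$ merely ensures $T_1(e)$ is nonempty, so that the conclusion is non-vacuous.
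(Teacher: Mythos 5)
Your proof is correct and follows essentially the same route as the paper: apply Lemma~\ref{lem:specialfriend} with the roles of $e$ and $f$ swapped to obtain $g'\in T_1(f)$ (the paper calls it $a$), and then use the exchange $(M\setminus\{e,f\})\cup\{g',g,h\}$ to contradict maximality. All the intersection checks you perform match the paper's argument.
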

\begin{proof}
By Lemma~\ref{lem:specialfriend}, there exists an edge $a \in T_1(f)$ such that $|a \cap g| < r-1$ and $|a \cap h| < r-1$ for every $h \in T_1(e)$. If there exists $h \in T_1(e)$ such that $|h \cap g| < r-1$, then $\left(M \setminus \{ e, f \}\right) \cup \{a, g, h\}$ is an $(r-1)$-matching of size greater than $|M|$, a contradiction.
\end{proof}

\begin{lemma}
\label{lem:m2edges}
Let $e\in M^-\setminus(M_0\cup M_1)$, and suppose $g \in B(e)$ connects $e$ and $f \in M^+$. Then the following hold:
\begin{enumerate}[(a)]
\item $v(e) = g \setminus e$
\item If $x$ in an indispensable $(r-1)$-set in $e$, then $e \cap f \nsubseteq x$. 
\end{enumerate}
\end{lemma}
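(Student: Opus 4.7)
Since $e \in M^- \setminus (M_0 \cup M_1)$, the edge $e$ has at least two indispensable $(r-1)$-sets. This rules out case~(a) of Lemma~\ref{lem:(r-1)friend} (which would produce only a single indispensable $(r-1)$-set in $e$), so we are in case~(b), and the vertex $v(e) \in V(H) \setminus e$ is defined and lies in every witness $h \in T_1(e)$. For the edge $g$ connecting $e$ to $f$, an inclusion-exclusion on $|g \cap e|$ and $|g \cap f|$ gives $|e \cap f| = r-2$ and $e \cap f \subseteq g$, so I can write $g = (e \cap f) \cup \{a,b\}$ with $a \in e \setminus f$ and $b \in f \setminus e$. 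Let $a'$ be the unique vertex of $(e \setminus f) \setminus \{a\}$, so that $e \setminus g = \{a'\}$.

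For part~(a), I would invoke Corollary~\ref{co:friendbadedgeintersection} to conclude $|g \cap h| \geq r-1$ for every $h \in T_1(e)$. Writing such an $h$ as $x \cup \{v(e)\}$ with $x$ an indispensable $(r-1)$-set in $e$, a direct count shows that $|x \cap g| = r-1$ precisely when $x = e \setminus \{a'\}$, and otherwise $|x \cap g| = r-2$. Since $e$ has at least two indispensable $(r-1)$-sets, at least one of them contains $a'$; for its witness the inequality $|g \cap h| \geq r-1$ forces $v(e) \in g \setminus e = \{b\}$, giving $v(e) = g \setminus e$ as claimed.

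For part~(b), I would argue by contradiction. Suppose some indispensable $(r-1)$-set $x \subset e$ satisfies $e \cap f \subseteq x$, and let $h = x \cup \{v(e)\}$ be a witness. By part~(a), $v(e) = b \in f$, so
\[ h \cap f \supseteq (x \cap f) \cup \{v(e)\} = (e \cap f) \cup \{v(e)\}, \]
which has size $r-1$. This contradicts $h \in T_1(e)$, which requires $|h \cap f'| \leq r-2$ for every $f' \in M \setminus \{e\}$. The only mild subtlety in the whole argument is pinning down at the outset that we are in case~(b) of Lemma~\ref{lem:(r-1)friend}; once $v(e)$ is in hand and the decomposition of $g$ is understood, both parts reduce to elementary counting combined with Corollary~\ref{co:friendbadedgeintersection}.
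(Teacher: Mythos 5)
Your proof is correct and follows essentially the same route as the paper's: you select an indispensable $(r-1)$-set $x$ with $|x\cap g|=r-2$ (guaranteed since $e$ has at least two indispensable sets), apply Corollary~\ref{co:friendbadedgeintersection} to its witness to force $v(e)=g\setminus e\in f$, and then derive (b) from the fact that a witness containing $e\cap f$ together with $v(e)$ would meet $f$ in $r-1$ vertices. The extra bookkeeping you supply (ruling out case~(a) of Lemma~\ref{lem:(r-1)friend} and the explicit decomposition $g=(e\cap f)\cup\{a,b\}$) is implicit in the paper's argument and does not change the substance.
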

\begin{proof}
Let $x$ be an indispensable $(r-1)$-set in $e$ such that $|x \cap g| = r-2$, which exists because $e\in M_i$ with $i\ge2$, and let $a \in T_1(e)$ be an edge witnessing $x$. By Corollary~\ref{co:friendbadedgeintersection}, $|a \cap g| \geq r-1$ implying that $a$ contains the vertex $(g \setminus e) \in f$. By Lemma~\ref{lem:(r-1)friend}, there exists a vertex $v(e) \notin e$ such that $v$ is contained in every edge of $T_1(e)$. It follows that $v := v(e) = (g \setminus e) \in f$ (as before, we do not distinguish between the singleton set and the vertex).

Now suppose that an edge $b \in T_1(e)$ contains $e \cap f$. Since $v \in b$, we have $|b \cap f| \geq r-1$, a contradiction. It follows that no indispensable $(r-1)$-set in $e$ contains $e \cap f$.
\end{proof}

\begin{corollary}
\label{co:bad23}
If $r = 3$, then there are no bad edges connecting $e \in M_2$ and $f \in M_3$.
\end{corollary}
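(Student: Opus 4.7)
The plan is to derive the corollary as a short consequence of Lemma \ref{lem:m2edges}(b), combined with a counting argument that is special to $r=3$. Assume for contradiction that some bad type-2 edge $g$ connects $e \in M_2$ and $f \in M^+ = M_3$. Since $r=3$ gives $M^- = M \setminus M_3$, the edge $e$ lies in $M_2 \subseteq M^- \setminus (M_0 \cup M_1)$, so the hypotheses of Lemma~\ref{lem:m2edges} are satisfied. Part (b) of that lemma then guarantees that no indispensable pair in $e$ contains the intersection $e \cap f$.

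The key step is then a trivial pigeonhole specific to $r=3$. Because $g$ is a type-2 edge connecting $e$ and $f$, we have $|e \cap f| = r-2 = 1$, so $e \cap f = \{u\}$ for a single vertex $u \in e$. The edge $e$ has only three pairs, and exactly two of them contain $u$; Lemma~\ref{lem:m2edges}(b) forbids either of those two pairs from being indispensable, leaving $e \setminus \{u\}$ as the only candidate indispensable pair in $e$. Hence $e$ has at most one indispensable pair, contradicting $e \in M_2$.

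I do not anticipate any real obstacle: the only point to verify carefully is that Lemma~\ref{lem:m2edges} is applicable, which is immediate from the definitions of $M^+$ and $M^-$ in the $r=3$ case together with the disjointness of the $M_i$. The rest is just the observation that three pairs on three vertices cannot accommodate two pairs that all avoid a prescribed vertex.
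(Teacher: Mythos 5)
Your proof is correct and follows essentially the same route as the paper's: apply Lemma~\ref{lem:m2edges}(b) to conclude that no indispensable pair of $e$ contains the single vertex $e \cap f$, then note that only one of the three pairs of $e$ avoids that vertex, contradicting $e \in M_2$. The paper states this more tersely (``at least one of the two indispensable pairs contains $e \cap f$''), but the content is identical.
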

\begin{proof}
Suppose $g$ connects $e \in M_2$ and $f \in M_3$. This implies $|e \cap f| = 1$. Since $e$ has two indispensable triples, at least one of them contains $e \cap f$, contradicting Lemma~\ref{lem:m2edges}.
\end{proof}

\begin{corollary}
\label{co:bad4}
Suppose $r \geq 4$, and let $e \in M^- \setminus (M_0 \cup M_1)$ such that $B(e) \neq \emptyset$. Let $i$ be such that $e\in M_i$. Then there exist at most $\binom{r}{2} - i(r-1) + \binom{i}{2}$ $(r-1)$-sets such that every edge in $B(e)$  contains at least one of them.
\end{corollary}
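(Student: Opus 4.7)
The plan is to exhibit an explicit cover of $B(e)$ of the claimed size. Let $I \subseteq e$ be the set of vertices $u \in e$ for which $e \setminus \{u\}$ is an indispensable $(r-1)$-set, so $|I| = i$. I will take as my cover the family
\[
\mathcal{C} := \left\{ (e \setminus S) \cup \{v(e)\} : S \in \binom{e \setminus I}{2} \right\},
\]
which has at most $\binom{r-i}{2}$ elements. A routine algebraic manipulation verifies $\binom{r-i}{2} = \binom{r}{2} - i(r-1) + \binom{i}{2}$, which yields the stated bound.

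To verify that $\mathcal{C}$ covers $B(e)$, fix $g \in B(e)$ connecting $e$ to some $f \in M^+$. Since $B(e) \neq \emptyset$, Lemma~\ref{lem:m2edges}(a) guarantees that $v(e)$ is well-defined with $v(e) = g \setminus e \notin e$; writing $g = (e \cap f) \cup \{w_e, v(e)\}$ with $w_e \in e \setminus f$, the $(r-1)$-set $(e \cap f) \cup \{v(e)\}$ is contained in $g$.

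The crux of the argument is the inclusion $I \subseteq e \cap f$. For each $u \in I$, the set $x = e \setminus \{u\}$ is an indispensable $(r-1)$-set in $e$, so Lemma~\ref{lem:m2edges}(b) gives $e \cap f \not\subseteq x$; since $e \cap f \subseteq e$ and $e \setminus x = \{u\}$, this forces $u \in e \cap f$. Combined with $|e \cap f| = r-2$, the set $S_f := e \setminus (e \cap f)$ is a $2$-subset of $e \setminus I$, and hence $(e \setminus S_f) \cup \{v(e)\} = (e \cap f) \cup \{v(e)\} \in \mathcal{C}$ covers $g$.

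I do not anticipate any serious obstacle here: once Lemma~\ref{lem:m2edges} is in hand, the corollary follows directly from the observation that part (b) pins the $i$-element set $I$ inside $e \cap f$, leaving only $\binom{r-i}{2}$ possibilities for $e \cap f$, each corresponding to an $(r-1)$-set in $\mathcal{C}$.
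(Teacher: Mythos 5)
Your proof is correct and takes essentially the same approach as the paper: both arguments use Lemma~\ref{lem:m2edges}(a) to show every $g \in B(e)$ contains the $(r-1)$-set $(e \cap f) \cup \{v(e)\}$, and Lemma~\ref{lem:m2edges}(b) to restrict which $(r-2)$-subsets of $e$ can arise as $e \cap f$. The only cosmetic difference is in the counting: you enumerate the admissible $(r-2)$-sets directly as those containing $I$, getting exactly $\binom{r-i}{2}$, whereas the paper reaches the (equal) quantity $\binom{r}{2} - i(r-1) + \binom{i}{2}$ by inclusion--exclusion.
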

\begin{proof}
By Lemma~\ref{lem:m2edges}(a), any edge $g$ that connects $e$ and $f \in M^+$ must contain $e \cap f$ and $v(e)$. Since $e \cap f$ is not contained in any indispensable $(r-1)$-sets (by Lemma~\ref{lem:m2edges}(b)), it suffices to bound the number of $(r-2)$-sets of $e$ that are not contained in any indispensable $(r-1)$-sets of $e$. By inclusion-exclusion, this number is at most $\binom{r}{2} - i(r-1) + \binom{i}{2}$.
\end{proof}

\begin{remark}
If $r\geq 4$, then, for any $e \in M_{r-2}$, there exists an $(r-1)$-set contained in all edges of $B(e)$.
\end{remark}

\begin{lemma}
\label{lem:m1edges}
If $e\in M_1$, then one of the following holds:
\begin{enumerate}[(a)]
    \item all edges in $T_1(e)$ and $B(e)$ share one $(r-1)$-set $w(e)$, or 
    \item $|T_1(e)|=1$. 
\end{enumerate}
\end{lemma}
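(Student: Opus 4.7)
The plan is to split on $|T_1(e)|$: if $|T_1(e)|=1$ conclusion (b) holds, so from now on I assume $|T_1(e)|\ge 2$ and aim to verify (a). Let $x\in\binom{e}{r-1}$ be the unique indispensable $(r-1)$-set of $e$, which exists because $e\in M_1$. For any $h\in T_1(e)$ the set $h\cap e$ is an indispensable $(r-1)$-set of $e$, so $h\cap e=x$ and $h=x\cup\{v_h\}$ for some $v_h\notin e$. Distinct edges of $T_1(e)$ yield distinct $v_h$'s, so under the hypothesis $|T_1(e)|\ge 2$ there are at least two such vertices. Set $w(e):=x$; then $w(e)\subset h$ for every $h\in T_1(e)$.

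It remains to show $w(e)\subseteq g$ for every $g\in B(e)$. Fix such a $g$ connecting $e$ to some $f\in M^+$. Since $M$ is an $(r-1)$-matching we have $|e\cap f|\le r-2$, and combining this with $|g\cap e|=|g\cap f|=r-1$ and $|g|=r$ via inclusion--exclusion forces $|e\cap f|=r-2$, $e\cap f\subset g$, and $g=(e\cap f)\cup\{u,v\}$ for some $u\in e\setminus f$ and $v\in f\setminus e$; in particular $g\setminus e=\{v\}$. Since $M_1\subseteq M^-\setminus M_0$, Corollary~\ref{co:friendbadedgeintersection} yields $|g\cap h|\ge r-1$, equivalently $|h\setminus g|\le 1$, for every $h\in T_1(e)$. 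As $v_h\notin e$, we have $v_h\in g$ iff $v_h=v$, which can hold for at most one $h$. Choosing any other $h\in T_1(e)$ then gives $v_h\in h\setminus g$ and hence $w(e)\setminus g=\emptyset$, i.e.\ $w(e)\subseteq g$.

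The main obstacle is exactly this concluding inclusion: converting the global intersection inequality $|g\cap h|\ge r-1$ into a containment of the specific $(r-1)$-set $w(e)$. The key point — and the reason $|T_1(e)|\ge 2$ is essential — is that a second witness in $T_1(e)$ supplies a second candidate vertex $v_h$, at most one of which can coincide with $v$; the other then forces $w(e)$ to lie entirely inside $g$. Apart from this step, the argument is straightforward combinatorial geometry together with Corollary~\ref{co:friendbadedgeintersection}.
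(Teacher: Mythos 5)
Your proposal is correct and follows essentially the same route as the paper: both arguments hinge on Corollary~\ref{co:friendbadedgeintersection} forcing $|g\cap h|\ge r-1$ for witnesses $h=x\cup\{v_h\}$ of the unique indispensable set $x$, so that either $x\subseteq g$ or $v_h$ equals the single vertex $g\setminus e$. You phrase it as the contrapositive (two witnesses give distinct $v_h$'s, so some witness certifies $x\subseteq g$), while the paper argues directly that a $g$ missing $x$ pins down the witness uniquely; the content is the same.
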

\begin{proof}
Let $x$ be the indispensable $(r-1)$-set in $e$, and suppose there exists an edge~$g \in B(e)$ that connects $e$ and $f \in M^+$ such that $x \nsubseteq g$. Let $h \in T_1(e)$ be a  witness for $x$. By Corollary~\ref{co:friendbadedgeintersection}, we have $|h \cap g| \geq r-1$ implying that $h$ contains the vertex $g \setminus e$, i.e., $h$ is determined uniquely. It follows that $|T(e)| = 1$.
\end{proof}

We are now ready to prove Theorems~\ref{thm:h-r-1-4} and~\ref{thm:h-r-1}. Since the proofs are similar, we only give details for the more involved proof of Theorem~\ref{thm:h-r-1}. For Theorem~\ref{thm:h-r-1-4}, we simply describe the fractional cover and leave the verification as an exercise.

\begin{proof}[Proof of Theorem~\ref{thm:h-r-1}]
Let $\alpha := \alpha(r) = \frac{r+2}{2(r+1)}$ for $r$ even, and $\alpha := \alpha(r) = \frac{r+3}{2(r+2)}$ for $r$  odd. For $r \geq 5$, let $H \in \fancyH_r$ and let $M$ be a maximum $(r-1)$-matching in $H$. For every~$e\in M$, we define $t_e:\binom{V(H)}{r-1} \to \mathbb{R}$ as follows (if we do not explicitly specify $t_e(x)$ for some $x \in \binom{V(H)}{r-1}$, then $t_e(x) = 0$):
\begin{enumerate}
    \item If $e \in M_0$: Set $t_e(x) = \alpha$ for every $x \in \binom{e}{r-1}$.
    \item If $e \in M_1$: We define functions $t^0_e,\, t^1_e :\binom{V(H)}{r-1}\to \mathbb{R}$ and set $t_e(x) = t_e^0(x) + t_e^1(x)$. 
    
    Set $t_e^0(x) = 1/2$ for every $x \in \binom{e}{r-1}$.
    
    If $e$ satisfies Lemma~\ref{lem:m1edges}(a), then set $t_e^1(w(e)) = 1/2$; otherwise set $t_e^1(x) = \frac{1}{2r}$ for each $x \in \binom{f}{r-1}$ where $f$ is the unique element of $T_1(e)$. 

    \item If $e \in M_i$ for $2\le i \le r-3$:
    Set $t_e(x) = \alpha$ for every $x \in \binom{e}{r-1}$.
    Lemma~\ref{lem:(r-1)friend} (b) applies and by Remark~\ref{rmk:(r-1)friendexactly1} there are exactly~$i$ edges in $T_1(e)$. Observation~\ref{obs:friends} implies that there exist at most $\ceil{i/2}$ $(r-1)$-sets such that every  edge in $T_1(e)$ contains one of these $(r-1)$-sets. For each such $(r-1)$-set~$y$, set $t_e(y) = 1-\alpha$. 
    
    \item If $e \in M_{r-2}$:     We define functions $t^0_e,\, t^1_e,\, t^2_e :\binom{V(H)}{r-1}\to \mathbb{R}$ and set $t_e(x) = t_e^0(x) + t_e^1(x)+t^2_e(x)$. 
    
    Set $t_e^0(x) = 1/2$ for every $x \in \binom{e}{r-1}$.
    Lemma~\ref{lem:(r-1)friend} (b) applies and by Remark~\ref{rmk:(r-1)friendexactly1} there are exactly $r-2$ edges in $T_1(e)$. By Observation~\ref{obs:friends} there exist $\ceil{\frac{r-2}{2}}$ $(r-1)$-sets such that every edge in $T_1(e)$ contains one of them. For each such $(r-1)$-set $y$, set~$t_e^1(y) = 1/2$. 
    
    By Lemma~\ref{lem:m2edges}, if $B(e) \neq \emptyset$ then all edges in $B(e)$ contain an $(r-1)$-set $z$. Set~$t^2_e(z) = \alpha-1/2$.
    
    \item If $e \in M^+$: Set $t_e(x) =1 - \alpha$ for every $x \in \binom{e}{r-1}$. For each $(r-1)$-set $y$ that contains~$v(e)$ and $r-2$ vertices from $e$, set $t_e(y)=\frac{\alpha}{r-1}$.

\end{enumerate}
Let $t = \sum_{e\in M} t_e$. It is easy to check that $t$ covers all edges in $M$, type-1 edges, and type-$i$ edges for $3\leq i\leq r$. Note that $\alpha \geq 1/2$ for all $r$, implying that good type-2 edges are covered. If a type-2 edge is in $B(e)$ for $e \in M_0$, then it is also easily seen to be covered. A type-2 edge in $B(e)$ for $e \in M_1$ gets weight at least $\frac{1}{2}+(1-\alpha)=\frac{2r+1}{2r+2}$ from $(r-1)$-sets contained in edges of $M$, and an additional weight $\frac{1}{2r}$ from $t_e^1(e)$ (which is sufficient by Lemma~\ref{lem:m1edges}). If a type-2 edge is in $B(e)$ for $e \in \cup_{i=2}^{r-3} M_i$, then it receives weight at least~$1$ from $(r-1)$-sets contained in edges of $M$. If a type-2 edge is in $B(e)$ for $e \in M_{r-2}$, then it receives weight at least $1/2+1-\alpha$ from $(r-1)$-sets contained in edges of $M$, and $\alpha-1/2$ from $t_e^2$.

Finally, we have
\[ |t_e| \leq
\begin{cases} 
r\alpha &\text{if }e \in M_0 \\
\frac{r+1}{2}&\text{if }e \in M_1\\
r\alpha+(1-\alpha)\ceil{\frac{r-3}{2}} &\text{if }e \in \bigcup_{i=2}^{r-3}M_i\\
\frac{r}{2}+\frac{1}{2}\ceil{\frac{r-2}{2}}+\alpha-\frac{1}{2}&\text{if }e \in M_{r-2}\\
r(1-\alpha)+\binom{r}{2}\frac{\alpha}{r-1} &\text{if }e \in M+\\
\end{cases}
\]

In each case, it is easy to check that the asserted bound holds.
\end{proof}

\begin{proof}[Proof of Theorem~\ref{thm:h-r-1-4}]
Let $H\in \fancyH_3$, and let $M$ a maximum $2$-matching in $H$. For every~${e\in M}$, we define $t_e:\binom{V(H)}{2} \to \mathbb{R}$ as follows (if we do not explicitly specify $t_e(x)$ for some~$x \in \binom{V(H)}{2}$, then $t_e(x) = 0$):
\begin{enumerate}
    \item If $e \in M_0$: Set $t_e(x) = 2/3$ for every $x \in \binom{e}{2}$.
    \item If $e \in M_1$: We define functions $t^0_e,\, t^1_e :\binom{V(H)}{2}\to \mathbb{R}$ and set $t_e(x) = t_e^0(x) + t_e^1(x)$. 
    
    Set $t_e^0(x) = 1/2$ for every $x \in \binom{e}{2}$.
    
    If $e$ satisfies Lemma~\ref{lem:m1edges}(a), then set $t_e^1(w(e)) = 1/2$; otherwise set $t_e^1(x) = 1/6$ for each $x \in \binom{f}{2}$ where $f$ is the unique element of $T_1(e)$. 
    \item If $e\in M_2$: Set $t_e(x)=1/2$ for every $x\in \binom{e}{2}$. By Observation~\ref{obs:friends}, the two edges in $T_1(e)$ must intersect in a pair $y$. Set $t_e(y)=1/2$.
    \item If $e \in M_3$: Set $t_e(x) = 1/3$ for every $x \in \binom{e\cup \{v(e)\}}{2}$. 
\end{enumerate}
Then $t = \sum_{e \in M} t_e$ is a cover of size at most $2|M|$.

Now let $H \in \fancyH_4$, and let $M$ be a maximum $3$-matching in $H$. For every $e\in M$, we define $t_e:\binom{V(H)}{3} \to \mathbb{R}$ as follows (if we do not explicitly specify $t_e(x)$ for some $x \in \binom{V(H)}{3}$, then $t_e(x) = 0$):
\begin{enumerate}
    \item If $e \in M_0$: Set $t_e(x) = 2/3$ for every $x \in \binom{e}{3}$.
    \item If $e \in M_1$: We define functions $t^0_e,\, t^1_e :\binom{V(H)}{3}\to \mathbb{R}$ and set $t_e(x) = t_e^0(x) + t_e^1(x)$. 
    
    Set $t_e^0(x) = 1/2$ for every $x \in \binom{e}{3}$.
    
    If $e$ satisfies Lemma~\ref{lem:m1edges}(a), then set $t_e^1(w(e)) = 1/2$; otherwise set $t_e^1(x) = 1/6$ for each $x \in \binom{f}{3}$ where $f$ is the unique element of $T_1(e)$. 

    \item If $e \in M_2$:     We define functions $t^0_e,\, t^1_e :\binom{V(H)}{3}\to \mathbb{R}$ and set $t_e(x) = t_e^0(x) + t_e^1(x)$. 
    
    Set $t_e^0(x) = 1/2$ for every $x \in \binom{e}{3}$.
    Lemma~\ref{lem:(r-1)friend} applies and by Remark~\ref{rmk:(r-1)friendexactly1} there are exactly two edges in $T_1(e)$ which, by Observation~\ref{obs:friends}, must intersect in a triple $y$. Set $t_e^0(y) = 1/2$. 
    
    By Corollary~\ref{co:bad4}, if $B(e) \neq \emptyset$ then all edges in $B(e)$ contain a triple $z$. Set $t^1_e(z) = 1/6$.
    
    \item If $e \in M^+$: Set $t_e(x) = 1/3$ for every $x \in \binom{e}{3}$. Since there are at most four edges in~$T_1(e)$ any two of which intersect in a triple, there are two triples $x$ and $y$ such that any element of $T_1(e)$ contains either $x$ or $y$. Set $t_e(x) = t_e(y) = 2/3$.
\end{enumerate}
Then $t = \sum_{e \in M} t_e$ is a cover of size at most $\frac{8}{3}|M|$.
\end{proof}

\section{Fractional 2-covers in 4-graphs}
\label{sec:(4,2)}

\subsection{Preliminaries}

Let $H \in \fancyH_4$, and let $M$ be a maximum $2$-matching in $H$. If we also have $H \in \fancyG_4$ then edges of $H$ correspond to $K_4$'s in the graph $G$ with $V(G) = V(H)$. To avoid confusion, throughout this section by edges we will always mean edges of $H$ and will refer only to subgraphs of $G$.

We say an edge in $H$ is of {\em type-$1$} (with respect to $M$) if it intersects one edge of $M$ in at least two vertices and shares at most one vertex with every other edge of $M$. Let~$T_1 \subseteq H$ denote the set of type-$1$ edges. For $e \in M$, let $H(e) = \{f \in H: |f \cap e| \geq 2\}$ and $T_1(e) = T_1 \cap H(e)$. Note that type-$1$ edges behave similarly to type-$1$ edges in  Section~\ref{sec:g-and-h} giving the following analogue of Observation~\ref{obs:friends}.
\begin{observation}
\label{obs:(4,2)friends}
Let $H \in \fancyH_4$, let $M$ be a maximum $2$-matching in $H$ with $e\in M$, and let $f, g \in T_1(e)$. Then $|f \cap g| \geq 2$.
\end{observation}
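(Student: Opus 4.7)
The plan is to mimic the proof of Observation~\ref{obs:friends} almost verbatim, since the notion of type-$1$ introduced in this section plays the same structural role. I would argue by contradiction: assume $|f \cap g| < 2$ and form the candidate collection
\[ M' := (M \setminus \{e\}) \cup \{f, g\}. \]
The goal is then to verify that $M'$ is a $2$-matching in $H$, for then $|M'| = |M| - 1 + 2 = |M| + 1$ would contradict the maximality of $M$.

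To justify that $M'$ is a $2$-matching, I would check each type of pair. Pairs of edges from $M \setminus \{e\}$ already intersect in fewer than $2$ vertices, since $M$ itself was a $2$-matching. The pair $\{f, g\}$ intersects in fewer than $2$ vertices by the contradiction hypothesis. Finally, for each $h \in M \setminus \{e\}$ the pairs $(f, h)$ and $(g, h)$ intersect in at most one vertex by the very definition of type-$1$: $f \in T_1(e) \subseteq H(e)$ forces $|f \cap e| \geq 2$, so the (unique) edge of $M$ meeting $f$ in at least two vertices is $e$, and hence $|f \cap h| \leq 1$ for every other $h \in M$; the same argument applies to $g$.

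It is also worth noting, for the step $|M'| = |M| + 1$, that $f, g \notin M \setminus \{e\}$: if $h \in M \setminus \{e\}$ then $|h \cap e| < 2$ since $M$ is a $2$-matching, so $h \notin H(e) \supseteq T_1(e)$. Together with $f \neq g$ (which we may assume, else $|f \cap g| = 4$ trivially), this gives $|M'| = |M| + 1$.

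I do not anticipate any real obstacle here: the statement is simply the $4$-uniform, $2$-matching analogue of Observation~\ref{obs:friends}, and no properties of type-$1$ edges beyond the defining one are needed. The only point deserving care is confirming that the definition of type-$1$ in this section (which allows the heavy intersection with the distinguished edge of $M$ to have size $2$, $3$, or $4$, rather than forcing it to equal $r-1$) still supplies the required pairwise bound with the remaining edges of $M$; this is built directly into the phrase ``shares at most one vertex with every other edge of $M$'' in the definition given just before the observation.
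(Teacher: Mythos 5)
Your proof is correct and is exactly the argument the paper intends: the observation is stated as an immediate analogue of Observation~\ref{obs:friends}, proved by swapping $f,g$ for $e$ in $M$ and noting that the definition of type-$1$ guarantees $|f\cap h|\le 1$ for every $h\in M\setminus\{e\}$. Your extra care about the modified definition of type-$1$ (heavy intersection of size $\ge 2$ rather than exactly $r-1$) and about $f,g\notin M\setminus\{e\}$ is exactly the right point to check, and it goes through as you say.
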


For $e \in M$, a pair $p \in \binom{e}{2}$ is {\em indispensable} in $e$ if there exists a type-$1$ edge $f$ with $e \cap f = p$ and we refer to $f$ as a {\em witness} of indispensability of $p$. The following is an immediate consequence of Observation~\ref{obs:(4,2)friends}.
\begin{corollary}
\label{co:(4,2)disjoint}
Let $e \in M$. If there exist two disjoint indispensable pairs $p_1, p_2$ in $e$ with witnesses $f_1, f_2$ respectively, then $f_1 \cap f_2$ is a pair $q = q(p_1, p_2)$ disjoint from $e$.

If we also have $H \in \fancyG_4$, then $G[e \cup q]$ is the graph $K_6$ implying that $H[e\cup q] = \binom{e \cup q}{4}$. 
\end{corollary}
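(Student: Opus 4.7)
The plan is to tackle the two assertions in sequence. For the first, I would begin by recording sizes: each $f_i$ is a 4-element edge of $H$, and since $f_i$ is a witness of the indispensable pair $p_i$ we have $f_i \cap e = p_i$ exactly, so $|f_i \setminus e| = 2$. Observation~\ref{obs:(4,2)friends} gives $|f_1 \cap f_2| \ge 2$. I would then split $f_1 \cap f_2$ along the partition $V(H) = e \cup (V(H) \setminus e)$: the portion of $f_1 \cap f_2$ lying in $e$ equals $p_1 \cap p_2 = \emptyset$ by the disjointness hypothesis, while the portion lying outside $e$ is $(f_1 \setminus e) \cap (f_2 \setminus e)$, which has at most $2$ elements. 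Combining these observations forces $f_1 \setminus e = f_2 \setminus e =: q$, a 2-set disjoint from $e$, and $f_1 \cap f_2 = q$.

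For the second assertion, assuming $H \in \fancyG_4$, I would verify $G[e \cup q] = K_6$ by showing that the three $K_4$-cliques $G[e]$, $G[f_1] = G[p_1 \cup q]$, and $G[f_2] = G[p_2 \cup q]$ together realize every pair in $\binom{e \cup q}{2}$. Pairs inside $e$ come from $G[e]$; pairs inside $q$ and pairs between $p_i$ and $q$ come from $G[f_i]$; since $e = p_1 \cup p_2$, this exhausts all cross-pairs between $e$ and $q$. Then every 4-subset of $e \cup q$ is a $K_4$ in $G$, hence an edge of $H$, so $H[e \cup q] = \binom{e \cup q}{4}$.

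There is no serious obstacle here: the whole argument is a short cardinality calculation on 4-sets followed by a clique-covering check. The only delicate point worth flagging is that the witness condition supplies the \emph{exact} equality $f_i \cap e = p_i$ (not merely $p_i \subseteq f_i \cap e$), which is precisely what rules out any contribution to $f_1 \cap f_2$ from inside $e$ and pins the intersection down to a unique 2-set $q$ external to $e$.
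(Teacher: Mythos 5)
Your proof is correct and is exactly the verification the paper intends: the paper states this corollary as an immediate consequence of Observation~\ref{obs:(4,2)friends} without writing out details, and your cardinality argument (using that a witness satisfies $f_i\cap e=p_i$ exactly) plus the clique-covering check for the $\fancyG_4$ case supplies precisely those details.
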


\begin{example}
\label{eg:K6}
Let $H \in \fancyG_4$ be the complete $4$-graph on six vertices, i.e., $H$ is the set of all $K_4$'s contained in $K_6$. Then $\nu^{(2)}(H) = 1$, and $\tau^{*(2)}(H) = \nu^{*(2)}(H) = 2.5$.
\end{example}

Note that Example~\ref{eg:K6} is maximal in the sense that, for a $4$-graph $H'$ with at least seven edges containing a copy of $H$, we have $\nu^{(2)}(H') > 1$. Hence, Corollary~\ref{co:(4,2)disjoint} plays a key role in understanding the structure of $H \in \fancyG_4$.

\subsection{Proof of Theorem~\ref{prop:g(4,2)} (a) and (b)}
The lower bound follows from Example~\ref{eg:K6}.

For the upper bound, let $H \in \fancyG_4$ with $\nu^{(2)}(H) = 1$ and let $e \in H$. Since $\nu^{(2)}(H) = 1$, every edge of $H$ must share at least two vertices with $e$. Note that if $p_1, p_2 \in \binom{e}{2}$ are such that $p_1 \cap p_2 = \emptyset$, then $\{p_1, p_2\}$ covers all edges that share at least three vertices with $e$. It follows that if there is at most one indispensable pair, then $\tau^{(2)}(H) \leq 2$. Indeed, we may take an indispensable pair along with the (unique) pair disjoint from it to be the cover.

By Corollary~\ref{co:(4,2)disjoint}, if $p_1, p_2$ are disjoint indispensable pairs in $e$ then $H[e\cup q(p_1, p_2)] = \binom{e \cup q}{4}$, implying that $H = \binom{e \cup q}{4}$, since otherwise $\nu^{(2)}(H) > 1$. It follows that $\tau^{(2)}(H) \leq 3$ and $\tau^{*(2)}(H) \leq 2.5$.

From here on we assume that there are at least two indispensable pairs, no two of which are disjoint. In particular, there are at most three indispensable pairs. Without loss of generality, let $e = 1234$, and $p_1 = 12, p_2 = 13$ be indispensable pairs with witnesses ${f_1 = 1256}$ and $f_2$ respectively. Since $f_2$ must intersect $f_1$ in at least two vertices, it contains at least one of the vertices $5$ or $6$. Without loss of generality, assume that $f_2 = 135u$ where $u \in V(H)\setminus 24$.
It suffices to consider the following cases.

{\bf Case 1:} $p_1$ and $p_2$ are the only indispensable pairs. Notice that any edge that intersects $e$ in exactly two vertices is covered by the set $C = \{p_1, p_2\}$. Additionally, any edge that intersects $e$ in three vertices and contains the vertex $p_1 \cap p_2 = 1$ is covered by $C$. This implies $\tau^{*(2)}(H) \leq \tau^{(2)}(H) \leq 2$ unless there exists an edge containing the vertices $e\setminus (p_1 \cap p_2) = 234$.

Suppose $g = 234v$, $v \in V(H)\setminus\{1\}$, is an edge in $H$. It is easy to see that  $\{p_1, p_2, 23\}$ is a cover, implying $\tau^{(2)}(H) \leq 3$. Notice that $g$ must intersect both $f_1$ and $f_2$ in two vertices, implying that $v \in f_1 \cap f_2$. If $u \neq 6$ then $f_1 \cap f_2 = 15$ and, hence, $g = 2345$. Observe that the pairs $46$ and $4u$ are not contained in any edge. Indeed, if $46$ is in an edge, then $G[1246]$ and $G[135u]$ are two disjoint $K_4$'s in $G$. If $4u$ is in an edge, then $G[134u]$ and $G[1256]$ are two disjoint $K_4$'s in $G$. 

Let $t: \binom{V(H)}{2} \rightarrow \RR$ be defined as follows: 
\[ t(x) = \begin{cases}
\frac{1}{3} &\mbox{if } x \in \{12, 13, 15, 23, 25, 35\} \\
0 & \mbox{otherwise}
\end{cases}.
\]
Then $t$ is a fractional cover implying $\tau^{*(2)}(H) \leq |t| = 2$. To see that $t$ is a fractional cover, note that any edge that is a witness for $p_1 = 12$ or $p_2 = 13$ must intersect $g$ in at least two vertices and, hence, contains the vertex $5$. Otherwise, if an edge intersects $e$ in three vertices, it must contain $123$ or the vertex $5$. Indeed, if an edge intersects $e$ in~$124$, then in order to intersect $f_2$ in at least two vertices, the edge must contain $5$ (by the discussion above, it cannot contain the pair $4u$). A similar argument shows that an edge that intersects $e$ in $134$ contains the vertex $5$.

Now suppose $u = 6$, then $v \in f_1 \cap f_2 = 156$. If $H$ contains both $2345$ and $2346$ as edges, then $G[123456] = K_6$. As before, it follows that $H = \binom{[6]}{4}$ implying $\tau^{*(2)}(H) \leq 2.5$, and $\tau^{(2)}(H) \leq 3$. We have already dealt with the case where $2345$ is an edge. If $2346$ is an edge, then we may obtain a fractional cover by replacing the vertex $5$ with the vertex~$6$ in the fractional cover~$t$ defined above. The proof follows similarly.

%%%%%%%%%%%%

{\bf Case 2:} There exists an indispensable pair $p_3$ such that $p_1, p_2, p_3$ form a triangle in $G$, i.e., $p_3 = 23$. Then $C = \{p_1, p_2, p_3\} = \{12, 13, 23\}$ is a cover. Clearly edges that intersect~$e$ in exactly two vertices are covered. If an edge intersects $e$ in at least three vertices, then it must contain at least two elements of the set $p_1 \cup p_2 \cup p_3 = 123$ and must be covered by $C$. It follows that $\tau^{(2)}(H) \leq 3$.

We now prove that $\tau^{*(2)}(H) \leq 2.5$. Recall that $e = 1234$, $p_1 = 12$ and $p_2 = 13$ have witnesses $f_1 = 1256$ and $f_2 = 135u$ (with $u \in V(H) \setminus 24$). Let $F_1, F_2$ and $F_3$ be the set of witnesses of $p_1, p_2$ and $p_3$ respectively. Note that, for any $i \neq j$, exactly one of the following holds:
\begin{enumerate}[(a)]
    \item There exist witnesses $f_i \in F_i$ and $f_j \in F_j$ with $|f_i \cap f_j| = 2$;
    \item $F_i = \{f_i\}, F_j = \{f_j\}$ and $|f_i \cap f_j| = 3$.
\end{enumerate}
It suffices to consider the following cases:

{\bf Case 2.1: } All pairs $i \neq j$ satisfy (b). Without loss of generality, let $F_1 = \{f_1\} = \{ 1256\}$, $F_2 = \{f_2\} = \{1356\}$ and $F_3 = \{f_3\} = \{2356\}$. Let $t : \binom{V(H)}{2} \rightarrow \RR$ be given by 
\[ t(x) = \begin{cases}
\frac{1}{3} &\mbox{if } x \in \{12, 23, 13\} \\
\frac{1}{6} &\mbox{if } x \in\{14, 15, 16, 24, 25, 26, 34, 35, 36\}\\
0 & \mbox{otherwise}
\end{cases}.\]
Then $t$ is a fractional cover with $|t| = 2.5$. Clearly every edge in $F_1 \cup F_2 \cup F_3$ is covered. If $f \cap e = 123$, then it is covered. Otherwise if $f \cap e \in \{124, 134, 234\}$, then $f$ must contain either the vertex $5$ or the vertex $6$ and so must be covered.

{\bf Case 2.2: } There exists a pair $i \neq j$ satisfying (a). Without loss of generality, suppose $f_1 = 1256 \in F_1$ and $f_2 = 1357 \in F_2$. Now any $f_3 \in F_3$ must satisfy $|f_3 \cap f_1|, |f_3 \cap f_2| \geq 2$ and so either contains the vertex $5$ or the pair $67$. But if $67$ were in some edge of $H$, then $G[1567]$ is a $K_4$ implying $\nu^{(2)}(H) > 1$, a contradiction. It follows that every edge in $F_3$ contains the vertex $5$.

Suppose every edge in $F_1 \cup F_2$ also contains the vertex $5$. Then we define a fractional cover $t: \binom{V(H)}{2} \rightarrow \RR$ as follows: 
\[ t(x) = \begin{cases}
\frac{1}{3} & \mbox{if } x \in \{12, 13, 15, 23, 25, 35\} \\
0 & \mbox{otherwise}
\end{cases}.
\]
To see that $t$ is a cover, note that edges in $F_1, F_2$, and $F_3$ are covered by pairs in $\{12, 15, 25\}$, $\{13, 15, 35\}$, and $\{23, 25, 35\}$, respectively. If $f \cap e = 123$, then clearly it is covered. Otherwise, if $f \cap e \in \{234, 134, 124\}$ then $|f \cap 1256|, |f \cap 1357| \geq 2$ implies that $f$ contains the vertex $5$ and, hence, is covered. Since $|t| = 2$, this implies the assertion.

Assume now that there is an edge in $F_1 \cup F_2$ that does not contain the vertex $5$. Without loss of generality, suppose $f_1' \in F_1$ is such an edge. Since $f_1' \cap (f_2 = 1357) \geq 2$, $f_1'$ contains the vertex $7$. But then $f_1'$ cannot contain the vertex 6 (since $67$ cannot be in an edge of $H$). By definition $f_1'$ cannot contain the vertices $3$ or $4$. Therefore, we may assume $f_1' = 1278$.

For convenience, we recap our assumptions: $e = 1234$, $\{ f_1 = 1256, f_1' = 1278\} \subseteq F_1$, $f_2 = 1357 \in F_2$ and that every edge in $F_3$ contains the triple $235$. We can also assume that the pair $67$ is not contained in any edge of $H$. For the same reason, the pair $58$ cannot be contained in an edge (otherwise $1578 \in H$ implying $\nu^{(2)}(H) > 1$).

Any $f_3 \in F_3$ must be of the form $235u$. Since $|f_3 \cap f_1'| \geq 2$, $f_3$ must contain one of the vertices $7$ or $8$. But by the discussion in the preceding paragraph $f_3$ cannot contain~$8$, implying that $F_3 = \{f_3\} = \{ 2357 \}$. This in turn implies that $f_2 = 1357$ is the unique witness of $p_2$, since $|f_2 \cap f| \geq 2$ for each $f \in \{f_1, f_1', f_3\}$. Let $t: \binom{V(H)}{2} \rightarrow \RR$ be given by: 
\[ t(x) = \begin{cases}
\frac{1}{3} &\mbox{if } x \in \{13, 23\} \\
\frac{2}{3} &\mbox{if } x \in \{35\}\\
1 &\mbox{if } x \in\{12\}\\
0 & \mbox{otherwise}
\end{cases}.
\]
Clearly $F_1, F_2$ and $F_3$ are covered. If $f \cap e \in \{123, 124\}$ then it is covered. If $f \cap e \in \{134, 234\}$ then $f$ cannot satisfy $|f \cap f_1| \geq 2$ and $|f \cap f_1'| \geq 2$, hence such an edge cannot exist. It follows that $t$ is a cover, implying $\tau^{*(2)}(H) \leq \frac{7}{3}$.

%%%%%%%%%%%%

{\bf Case 3:} There exists an indispensable pair $p_3$ such that $p_1, p_2, p_3$ form a $K_{1, 3}$ in $G$, i.e., $p_3 = 14$.
Any edge that intersects $e$ in exactly two vertices must contain one of the pairs $\{p_1, p_2, p_3\}$. If an edge intersects $e$ in three vertices and contains the vertex $p_1 \cap p_2 \cap p_3 = 1$, then it is covered by some $p_i$. It follows that $\tau^{(2)}(H) \leq 3$, unless there exists an edge intersecting $e$ in exactly the vertices $234$. Suppose there exists $g$ with $g \cap e = 234$. Since $|g \cap f_1| \geq 2$, $g$ must contain either the vertex~$5$ or the vertex $6$. Without loss of generality, let $g = 2345$. But now any edge that witnesses $p_1$, $p_2$, or $p_3$ must also contain the vertex $5$ (since it must intersect $g$ in at least two vertices). It follows that the set $\{15, 12, 34\}$ is a cover implying $\tau^{(2)}(H) \leq 3$.

To bound the size of the optimal fractional cover, as in Case~2, it suffices to consider the following cases. 

{\bf Case 3.1: } Let $F_1 = \{f_1\} = \{ 1256\}$, $F_2 = \{f_2\} = \{1356\}$ and $F_3 = \{f_3\} = \{1456\}$. But then $G[3456]$ is a $K_4$, implying that $H = \binom{[6]}{4}$.

{\bf Case 3.2: } 
Suppose $f_1 = 1256 \in F_1$ and $f_2 = 1357 \in F_2$. As in Case~2, this implies that every edge in $F_3$ contains the vertex $5$. Observe that the pair $46$ is not contained in an edge otherwise $\{1357, 1246\}$ is a 2-matching in $H$, a contradiction. Similarly the pair~$47$ is not contained in an edge, otherwise $\{1256, 1347\}$ is a 2-matching in $H$. That is, we may assume $f_3 = 1458 \in F_3$.

Now any edge $f_1' \in F_1$ must satisfy $|f_1' \cap f_3| \geq 2$ implying that it must contain the vertex $5$ or the vertex $8$. But if $28 \subset f_1'$, then $\{1357, 2458\}$ is a 2-matching in $H$. It follows that every edge in $F_1$ must contain the vertex $5$. Similarly every edge in $F_2$ must contain the vertex $5$, since if $38$ is contained in an edge then $\{1256, 1378\}$ is a $2$-matching in $H$. Now we define a cover $t: \binom{V(H)}{2} \rightarrow \RR$ as follows: 
\[ t(x) = \begin{cases}
1 & \mbox{if } x \in \{15, 23\} \\
0 & \mbox{otherwise}
\end{cases}.
\]
To see that $t$ is a cover, we note that all type-1 edges contain the pair $15$. If $f \cap e \in \{123, 124, 134\}$, then it contains the vertex $5$, and, hence, the pair $15$. Otherwise if $f \cap e = 234$, then it is covered by $23$. Since $|t| = 2$, this implies the assertion.
\qed

\subsection{Proof of Theorem~\ref{prop:g(4,2)} (c)}

To see the lower bound, let $H \in \fancyH_4$ be the hypergraph with edge set \[ \{1234, 1256, 3456, 1367, 2467, 1457, 2357\}. \]
It is easy to check that $\nu^{(2)}(H) = 1$ and $\tau^{*(2)}(H) = \nu^{*(2)}(H) = 3.5$. The hypergraph $H$ appeared in \cite[Proposition 3.8]{az20}, however, the fractional 2-cover and 2-matching numbers of this hypergraph were not noted. To get some intuition about $H$, let $e = 1234$ and observe that $\binom{e}{2}$ can be decomposed into three pairs of disjoint elements, specifically $\{\{12, 34\}, \{13, 24\}, \{23, 14\}\}$. For each such pair, e.g., $\{12, 34\}$ we add edges $f_1$ and $f_2$ such that $f_1 \cap e = 12$ and $f_2 \cap e = 34$, respectively, and $f_1, f_2$ contain the pair $56$ (which is needed to ensure that $\nu^{(2)}(H) = 1$). Similarly, we add two edges for $\{13, 24\}$, and two edges with for $\{12, 34\}$ while ensuring that $\nu^{(2)}(H) = 1$. 

For the upper bound, let $H \in \fancyH_4$ be a $4$-graph with $\nu_2(H) = 1$, and $e = 1234$ be an edge of $H$. Suppose all edges intersect $e$ in exactly two vertices. We may decompose $\binom{e}{2}$ into three pairs of disjoint elements $M_1 = \{12, 34\}$, $M_2 = \{13, 24\}$, and $M_3 = \{23, 14\}$. By Corollary~\ref{co:(4,2)disjoint}, for each $i \in [3]$, if there are witnesses for both pairs in $M_i$, there is a pair $q_i$ not in $e$ contained in these witnesses. If there is a witness for only one pair in $M_i$,  let $q_i$ be this pair. Let $t: \binom{V(H)}{2} \rightarrow \RR$ be defined as follows: 
\[ t(x) = \begin{cases}
\frac{1}{6} &\mbox{if } x \in \binom{e}{2} \\
\frac{5}{6} &\mbox{if } x \in \{q_1, q_2, q_3\} \\
0 & \mbox{otherwise}
\end{cases}
\]
It is easy to see that $t$ is a fractional cover, implying that $\tau^{*(2)}(H) \leq |t| = 3.5$.

From here on, let $f$ be an edge intersecting $e$ in exactly three vertices. Assume without loss of generality that $f = 1235$. Let $p_1 = 12, p_2 = 23, p_3 = 13$ and $p_4 = 45$, and note that $C = \{p_1, p_2, p_3, p_4\}$ is a cover of $H$ of size $4$. Indeed, for any edge $g$, $|g \cap e|, |g \cap f| \geq 2$ imply the following:
\begin{equation}
\label{eq:g*(4,2)main}
\mbox{Either $g$ contains one of the pairs $p_1, p_2, p_3$, or $g$ contains $p_4$}.
\end{equation}
We may assume that for each $i\in [4]$ there is a nonempty set $E_i$ of edges in $H$ containing~$p_i$ and no other element of $C$, otherwise $\tau(H)\le 3$. 

For $1\leq i\leq 4$, let $f_i\in E_i$. Note that $|f_4 \cap e| \geq 2$ implies $f_4$ contains exactly one of vertices $1$, $2$, or $3$. Without loss of generality, let $f_4 = 3456$. This implies that $E_1 \subseteq \{1246, 1256\}$, since $45 \nsubseteq f_1$ and $|f_1 \cap 3456| \geq 2$. Now if every edge in $E_2 \cup E_3$ contains the vertex $6$, then we obtain a fractional cover $t: \binom{V(H)}{2} \rightarrow \RR$ given by: 
\[ t(x) = \begin{cases}
1 &\mbox{if } x \in \{ 45\} \\
\frac{1}{2} &\mbox{if } x \in \{12, 13, 23, 26, 36\} \\
0 & \mbox{otherwise}
\end{cases}
\]
is a cover of size $|t| = 3.5$.

Therefore, we may assume that there exists an edge in $E_2 \cup E_3$ that does not contain the vertex 6. Without loss of generality, let $f_2 \in E_2$ be such an edge. Since $45 \nsubseteq f_2$ and $|f_2 \cap f_4| \geq 2$, $f_2$ must contain exactly one of the vertices $4$ or $5$. We assume that $f_2 = 2347$; the case when $f_2 = 2357$ can be dealt with by interchanging the roles $e$ and $f$ and relabelling the vertices appropriately. Since any edge $f_1 \in E_1$ satisfies $|f_1 \cap f_2| \geq 2$, this implies that $E_1 = \{1246\}$.

 For convenience, we reiterate that we may assume that $H$ contains the edges $e = 1234, f = 1235, f_2 = 2347$ and $f_4 = 3456$, and that $E_1 = \{f_1\} = \{ 1246 \}$. As a consequence, we obtain that every edge $f_2' \in E_2$ must contain the vertex $4$ or the vertex $6$ (since $|f_2' \cap f_1|, |f_2' \cap f_4| \geq 2$), and every edge $f_3 \in E_3$ must contain the vertex $4$ or the pair $67$ (since $|f_3 \cap f_1|, |f_3 \cap f_2| \geq 2$).

If every edge in $E_2$ contains the vertex $4$, a fractional cover $t: \binom{V(H)}{2} \rightarrow \RR$ is given by: 
\[ t(x) = \begin{cases}
1 &\mbox{if } x \in \{ 45\} \\
\frac{1}{2} &\mbox{if } x \in \{12, 13, 16, 23, 34\} \\
0 & \mbox{otherwise}
\end{cases}
\]
implying $\tau^{*(2)}(H) \leq |t| = 3.5$.

We may now assume that there is $f_2' \in E_2$ such that $f_2' = 236w$ with $w \in V(H) \setminus 14$.

{\bf Case 1:} Suppose $1245$ is not an edge in $H$. Then $t: \binom{V(H)}{2} \rightarrow \RR$ given by: 
\[ t(x) = \begin{cases}
\frac{1}{2} &\mbox{if } x \in \{13, 23, 24, 26, 34, 36, 45\} \\
0 & \mbox{otherwise}
\end{cases}\]
is a cover of size $|t| = 3.5$. Indeed any edge that contains two or more pairs in $C$ is covered (since $1245 \notin H$). Also, edges in $E_1$ are covered by $\{24, 26\}$, edges in $E_2$ are covered by $\{23,34,36\}$, and edges in $E_3$ are covered by $\{ 13, 34, 36\}$. Every edge in $E_4$ must contain one of the vertices $1, 2$ or $3$, since it must intersect $1234$ in a pair. But it cannot contain~$1$, since otherwise it does not intersect $f_2'$ in a pair, a contradiction. If it contains 2 or 3 then it is covered by $\{ 45, 24, 34 \}$.  

%%%%%%%%%%%%%%%%

{\bf Case 2:} Suppose $g = 1245$ is an edge in $H$. Since $|f_2'\cap g|\geq 2$, $f_2'=2356$. Then $t: \binom{V(H)}{2} \rightarrow \RR$ given by: 
\[ t(x) = \begin{cases}
1 &\mbox{if } x \in \{23, 45\} \\
\frac{1}{2} &\mbox{if } x \in \{12, 13, 16\} \\
0 & \mbox{otherwise}
\end{cases}
\]
is a fractional cover of size $|t| = 3.5$. Indeed any edge that contains at least two elements of~$C$ is covered. Any edge that contains $p_2$ or $p_4$ is also covered. Edges in $E_1$ are covered by $\{12, 16\}$. Recall that edges in $E_3$ must contain the vertex $4$ or the pair $67$. But any~$f_3 \in E_3$ must also satisfy $|f_3 \cap 1245| \geq 2$ implying that $f_3$ must contain the vertex $4$. Finally, $|f_3 \cap 2356| \geq 2$ implies that $E_3 = \{ 1346 \}$ and so is covered by $\{13, 16\}$.

This concludes the proof of the theorem. 
\qed

%%%%%%%%%%%%%%%%%%%%%%%%%%

\subsection{Proof of Theorem~\ref{thm:hcirc-4-2}}

Let $H \in \fancyG_4$ be a $4$-graph, and let $M$ be a maximum $2$-matching in $H$.  Define the function $t: \binom{V(H)}{2}\to \mathbb{R}$ by $t(x) = 1/2$ if $x\in \{ \binom{e}{2} : e\in M\}$, and $t(x) = 0$ otherwise. Note that $|t| = 3|M|$ and that $t$ is a fractional $2$-cover of all edges in $H \setminus T_1$. It remains to cover type-1 edges, each of which already has weight $1/2$ by $t$. To finish the proof, it suffices to show that, for any $e \in M$, edges in $T_1(e)$ can be covered using additional weight at most~1. This follows from the fact, proved ahead, that there exist two pairs such that every edge in $T_1(e)$ contains at least one of them. Indeed, assigning weight $1/2$ to both these pairs suffices to cover all edges in $T_1(e)$.

Suppose that there are two disjoint indispensable pairs $p_1, p_2$ with witnesses $f_1$, $f_2$. By Corollary~\ref{co:(4,2)disjoint}, $f_1 \cap f_2 = q$ is a pair disjoint from $e$ and $H[e \cup q] = \binom{e \cup q}{4}$. In particular, every pair contained in $e$ in indispensable and is witnessed by an edge that contains $q$.  Suppose $f \in T_1(e)$ does not contain $q$ and set $p = f \cap e$. Then the edge $f' \in F(e)$ which witnesses $e \setminus p$ and contains $q$ shares at most one vertex with $f$, contradicting  Observation~\ref{obs:(4,2)friends}. It follows that every edge in $T_1(e)$ contains $q$. 

Since out of any four indispensable pairs two must be disjoint, from here on, we may assume that there are exactly three indispensable pairs. Without loss of generality, let $e = 1234$, $p_1 = 12$, $p_2 = 13$ and $p_3$ be indispensable pairs. For $i \in [3]$, denote by $F_i$ the set of edges witnessing $p_i$.  We may also assume that, for $i \neq j$, edges in $F_i \cup F_j$ do not contain a common pair $p$, otherwise $p \subseteq \bigcap_{f \in F_i \cup F_j} f$ and all edges in $T_1(e)$ contain either $p$ or $p_k$, where $k \notin \{i,j\}$. In particular, for $i \neq j$, we have $|\bigcap_{f \in F_i \cup F_j} f| = 1$ implying that there are at least three edges in $F_i \cup F_j$ (any two edges in $T_1(e)$ intersect in at least two vertices). Also since, by assumption, both $F_i$ and $F_j$ are non-empty, we have that there exist edges $f_i \in F_i$ and $f_j \in F_j$ such that $|f_i \cap f_j| = 2$.

The above discussion lets us assume, without loss of generality, that $f^1_1, f^2_1 \in F_1$ and $f_2 \in F_2$ with $f^1_1 = 1256$, $f_2 = 1367$ and $|f^1_1 \cap f^2_1 \cap f_2| = 1$. Since $|f^2_1 \cap f_2| \geq 2$, $f^2_1$ must contain the vertex $7$, i.e., $f^2_1 = 127u$, $u \in V(H) \setminus 346$. We may also assume that $u \neq 5$, since otherwise $G[2567]$ is a $K_4$ implying $2657 \in H$. But $2657$ intersects every edge of $M$ in at most one vertex (a pair contained in an edge of $T_1(e)$ cannot be contained in any other edge in $M$) implying that $M \cup \{ 2567\}$ is a matching of size greater than $|M|$, a contradiction to the maximality of $M$. From here on, we may assume that $f^2_1 = 1278$.

In the preceding paragraph, we use the following observation about $H$ and the corresponding graph $G$. Since it is used again throughout the proof, we make it explicit. Let $p \subseteq f$ be a pair where $f\in T_1(e)$. Any edge in $M\setminus e$ (a $K_4$ in $G$) can share at most one vertex with $p$ (which corresponds to an edge of $G$). Hence, if there exists an edge $e'$ containing at most one vertex in $e$ such that all pairs in $e'$ are contained in an edge of $T_1(e)$ then $M \cup e'$ is a matching of size greater than $|M|$, a contradiction.

For each $u \in 5678$, the pair $1u$ is contained in $f_1^1$ or $f_1^2$. The pairs $56, 67$ and $78$ are contained in $f_1^1, f_2$ and $f_1^2$ respectively. By the preceding paragraph, it follows that neither of the pairs $57$ or $68$ can be contained in an edge of $H$ (otherwise $1567$ or $1268$ is an edge of $H$). Let $f_3$ be a witness for $p_3$, and note that $|f_3 \cap f| \geq 2$ for each $f \in \{f_1^1, f_1^2, f_2\} = \{1256, 1278, 1367\}$ implies that $f_3$ must contain one of the pairs in $\{57,67,68\}$. We have already argued that $57$ and $68$ cannot be contained in edges of $H$, so $f_3$ is determined uniquely. It suffices to consider the following cases:

{\bf Case 1: } $p_3 = 23$ implying that $f_3 = 2367$. This implies that every edge in $F_2$ contains the pair $36$. Indeed, let $f \in F_2$ be an edge that does not contain $36$, i.e., $f$ does not contain~$6$. Since $|f \cap 1256| \geq 2$ and $|f \cap 1278| \geq 2$, $f$ must contain the vertices $5$ and~$8$ ($57$ cannot be contained in an edge). That is, $f = 1358$. But then $f$ and $f_3 = 2367$ are edges in $T_1(e)$ that share less than two vertices, contradicting Observation~\ref{obs:(4,2)friends}.

We have shown that every edge in $F_2 \cup F_3$ contains the pair $36$, and so every edge in $T_1(e)$ contains at least one of the pairs $12$ or $36$.

{\bf Case 2: } $p_3 = 14$ implying that $f_3 = 1467$. But then $3467 \in H$ (since $367 \in f_2$ and $34 \in e$) and $1278$ are edges in $T_1(e)$ that share less than two vertices, a contradiction.
\qed

%%%%%%%%%%%%%%%%%%%%%%%%%

\section{\texorpdfstring{The function $j^*$}{The function j*}}
\label{sec:j*}

\subsection{The hypergraph Tur\'an Problem}

Recall that $\ex_r(n, k)$ is the maximum number of edges in an $r$-graph on $n$ vertices that does not contain a copy of $K_{k}^r$, the complete $r$-graph on $k$ vertices.
Let $T(n, k, r)$ be the minimum number of edges in an $r$-graph $H$ on $n$ vertices such that any subset of $k$ vertices contains at least one edge of $H$. Note that if $H$ is an $r$-graph on $[n]$ such that any subset of $k$ vertices contains at least one edge of $H$, and $G$ is the complement of $H$, i.e., $G=\{e \in \binom{[n]}{r}: e \not \in H\}$, then $G$ is an $r$-graph on $[n]$ that does not contain a copy of~$K^r_k$; and vice-versa. It follows that $T(n,k,r) + \ex_r(n,k) = \binom{n}{r}.$ Let \[ t(k,r) = \lim_{n \rightarrow \infty} \frac{T(n,k,r)}{\binom{n}{r}}, \quad \mbox{and} \quad \pi(k,r) = \lim_{n \rightarrow \infty} \frac{\ex_r(n,k)}{\binom{n}{r}}.\]
Then we have $ t(k,r) + \pi(k, r) = 1.$ The ratio $T(n,k,r)/\binom{n}{r}$ is non-decreasing (see e.g.~\cite{sidorenko97}), hence the limits above exist and, for any $n$,
\begin{equation}
\label{eq:limbound}
 T(n, k, r) \leq t(k, r) \binom{n}{r}, \quad \mbox{and} \quad \ex_r(n, k)\geq \pi(k, r) \binom{n}{r}.
\end{equation} 

Given an $r$-graph $H$, a {\em $K_{k}^{r}$-cover} of $H$ is a set $C$ of edges such that every $K_{k}^{r}$ in $H$ contains at least one edge of $C$. The number $T(n, k, r)$ is precisely the size of the smallest $K_{k}^{r}$-cover of $H = K_n^r$. Let $\Tilde{t}(r) = \inf \{t \in \RR: \forall\, r\mbox{-graphs}\,H\,\exists \mbox{ a $K_{r+1}^{r}$-cover of size} \leq t |H|\}$. Clearly
\begin{equation}
\label{eq:tildetbound}
t(r+1, r) \leq \Tilde{t}(r), \quad \mbox{and} \quad \pi(r+1, r) \geq 1 - \Tilde{t}(r).
\end{equation}
We show that some constructions that give bounds on $t(r+1, r)$ also give bounds for $\Tilde{t}(r)$. It is well known that $\Tilde{t}(2) \leq 1/2$.
Tur\'an~\cite{turan41} showed that $t(4,3) \leq 4/9$, and conjectured that this was optimal. Based on Tur\'an's construction, we have the following.
\begin{lemma}
\label{lem:cover43}
For any 3-graph $H$, there exists a $K_{4}^3$-cover of size at most $\frac{4}{9}|H|.$ That is~$\Tilde{t}(3) \leq 4/9.$
\end{lemma}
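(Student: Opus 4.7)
The plan is to establish Lemma~\ref{lem:cover43} via a probabilistic argument built directly on the Tur\'an construction of a $K_4^3$-free 3-graph. Specifically, I would color the vertices of $H$ uniformly at random with three colors and then let the cover $C$ consist of those edges of $H$ whose color pattern lies outside the Tur\'an configuration. Concretely, call an edge $e=\{x,y,z\}$ \emph{bad} under a coloring $c:V(H)\to\{1,2,3\}$ if either (i) all three vertices receive the same color, or (ii) two vertices receive some color $i$ and the third receives color $i-1$ (mod $3$); otherwise $e$ is Tur\'an-type (rainbow, or two of color $i$ and one of color $i+1$). Let $C_c$ be the set of bad edges of $H$.

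First I would verify the covering property: for any coloring $c$ and any $4$-set $S\subseteq V(H)$ with $\binom{S}{3}\subseteq H$, at least one triple in $\binom{S}{3}$ lies in $C_c$. This is precisely the fact that the Tur\'an construction is $K_4^3$-free, and I would verify it by a short case analysis on the color pattern of $S$, namely $(4,0,0)$, $(3,1,0)$, $(2,2,0)$, and $(2,1,1)$. In each pattern one checks directly that at least one of the four triples of $S$ is bad (for instance, a monochromatic 4-set gives four bad monochromatic triples, while a $(2,1,1)$ pattern yields one bad triple since exactly one of the two "two of color $i$" triples fails the $+1$ direction).

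Next I would compute the expected size of $C_c$ when each vertex is colored independently and uniformly in $\{1,2,3\}$. For any fixed edge $e$, there are $3^3=27$ equally likely color assignments on $e$; of these, the bad ones are the $3$ monochromatic assignments together with the $3\cdot 3=9$ assignments of the form "two of color $i$, one of color $i-1$", for a total of $12$. Hence
\[ \Pr[e\text{ is bad}]=\frac{12}{27}=\frac{4}{9}, \]
and linearity of expectation gives $\mathbb{E}[|C_c|]=\tfrac{4}{9}|H|$. Consequently there exists a deterministic coloring $c$ with $|C_c|\le \tfrac{4}{9}|H|$, and by the first step $C_c$ is a $K_4^3$-cover, completing the proof.

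The only delicate point is the case check in step one; it is purely combinatorial and short, but one must be careful with the cyclic orientation ("$+1$" is Tur\'an, "$-1$" is bad), since mixing up the direction would make the $(2,2,0)$ case fail. Everything else is an application of linearity of expectation, and the resulting bound matches the density $4/9$ of the complement of the Tur\'an hypergraph, which is exactly why this construction is the natural one to randomize.
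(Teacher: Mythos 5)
Your proof is correct and is essentially the paper's argument: a uniformly random $3$-partition of $V(H)$, with the cover taken to be the complement of the Tur\'an configuration, followed by linearity of expectation giving expected size $\frac{4}{9}|H|$. The only cosmetic difference is that you orient the ``two--one'' bad pattern as $(i,i-1)$ where the paper uses $(i,i+1)$, which is equivalent under relabeling of the parts, and you spell out the case analysis that the paper leaves as ``easy to check.''
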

\begin{proof}
Let $V_0 \cup V_1 \cup V_2 = V(H)$ be a uniform random partition of $V(H)$, and set $V_3 = V_0$. Specifically, each $u \in V(H)$ is in $V_i$, $i \in [3]$, with probability $1/3$ independent of other vertices. Let $C$ be the collection of edges that have all three vertices in one part, or, for some $0 \leq i \leq 2$,  have two vertices in $V_i$ and one vertex in $V_{i+1}$. It is easy to check that $H \setminus C$ is $K_4^3$-free. For an edge $e \in H$, the probability that $e \in C$ is $4/9$. It follows that $\E|C| = \frac{4}{9} |H|$, which implies the claim.
\end{proof}

Extending the same idea to $4$-graphs gives:
\begin{lemma}
\label{lem:cover54}
For any 4-graph $H$, there exists a $K_{5}^4$-cover of size at most $\frac{3}{8}|H|.$ That is~$\Tilde{t}(4) \leq 3/8.$
\end{lemma}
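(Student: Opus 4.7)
The plan is to generalise the random-partition argument from Lemma~\ref{lem:cover43}. I would partition $V(H)$ uniformly at random into four parts $V_0, V_1, V_2, V_3$, with indices taken modulo $4$, and define $C$ to be the collection of edges $e \in H$ that, for some $i \in \mathbb{Z}_4$, fall into one of the following ``bad'' configurations: (i) all four vertices lie in $V_i$; (ii) three lie in $V_i$ and one in $V_{i+1}$ or $V_{i-1}$; (iii) two lie in $V_i$ and two in $V_{i+2}$; or (iv) two lie in $V_i$, one in $V_{i-1}$, and one in $V_{i+1}$. Counting favourable label-sequences, each fixed edge will satisfy $\Pr[e \in C] = (4+32+12+48)/4^4 = 3/8$, so by linearity $\mathbb{E}|C| = \tfrac{3}{8}|H|$ and some realisation gives $|C| \le \tfrac{3}{8}|H|$.

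The heart of the argument is to show that for every realisation of the partition, $C$ is a $K_5^4$-cover; equivalently, no 5-vertex set $S \subseteq V(H)$ has all five of its 4-subsets of ``good'' type. I would verify this by case analysis on the multiset of labels carried by $S$. Since labels lie in $\mathbb{Z}_4$ and $|S|=5$, the multiset has one of six shapes: $(5), (4,1), (3,2), (3,1,1), (2,2,1),$ and $(2,1,1,1)$. In the first three, a bad 4-subset of type (i) or (ii) appears at once. In case $(3,1,1)$ the two 4-subsets obtained by dropping a singleton have type $(3,1)$; being good would force both singletons into the part $V_{j+2}$ opposite the tripled part $V_j$, contradicting that the singletons lie in distinct parts. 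Case $(2,2,1)$ is ruled out by a short cyclic-distance computation in $\mathbb{Z}_4$: the constraint that the $(2,2)$ 4-subset be good (the two doubled parts cyclically adjacent) is incompatible with the good-pattern constraints on the four $(2,1,1)$ 4-subsets. Finally, in case $(2,1,1,1)$ with doubled part $V_j$, the 4-subset obtained by removing the unique vertex in $V_{j+2}$ has precisely the bad pattern (iv).

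The main obstacle I anticipate is identifying the correct family of bad patterns. It must be $\mathbb{Z}_4$-rotation invariant, must carry total probability exactly $3/8$, and must interlock with the six 5-vertex cases so that none is all-good. The key asymmetry is that for $(3,1)$ patterns one declares the \emph{adjacent} placements bad, whereas for $(2,2)$ patterns one declares the \emph{opposite} placement bad; this switch is precisely what rules out cases $(3,1,1)$ and $(2,2,1)$. Once the list (i)--(iv) is pinned down, both the probability calculation and the six-case verification are routine.
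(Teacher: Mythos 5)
Your proof is correct and follows essentially the same approach as the paper's: a uniform random partition of $V(H)$ into four parts, a $\mathbb{Z}_4$-rotation-invariant family of bad label patterns of total probability $3/8$, and linearity of expectation. Your family of bad patterns differs from the paper's (which forbids all $(2,2)$ splits and the all-distinct pattern, and forbids $(3,1)$ only when the singleton lies in $V_{i+1}$ or $V_{i+2}$), but both families account for $96/256 = 3/8$ of the label sequences and both yield valid $K_5^4$-covers; your six-case check on the label multiset of a $5$-set is sound.
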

\begin{proof}
Since the proof is very similar to that of Lemma~\ref{lem:cover43}, we only give a sketch. Let $V_0 \cup V_1 \cup V_2 \cup V_3 = V(H)$ be a uniform random partition of $V(H)$, and set $V_{4} = V_0$ and $V_5=V_1$. Let $C$ be the collection of edges that satisfy one of the following:
\begin{itemize}
    \item all four vertices are in one part,
    \item there is one vertex in each part,
    \item For $i \neq j$, $|e \cap V_i| = 2$ and $|e \cap V_j| = 2$,
    \item For some $0 \leq i \leq 3$, $|e \cap V_i| = 3$ and $|e \cap V_{i+1}| = 1$,
    \item For some $0 \leq i \leq 3$, $|e \cap V_i| = 3$ and $|e \cap V_{i+2}| = 1$.
\end{itemize}
It is easy to check that $C$ is a cover and that $\E|C| = \frac{3}{8}|H|$ implying the assertion.
\end{proof}

For larger $r$, we use the following construction of Frankl and R{\"o}dl~\cite{fr85}. There are constructions that give better bounds, but this suffices for our purposes.
\begin{lemma}
\label{lem:hypergrahcover}
For any $l \in \NN$ and any $r$-graph $H$, there exists a $K_{r+1}^r$-cover of size at most \[ \left[\frac{1}{l} + \left(1 - \frac{1}{l}\right)^{r}\right] \left|H\right|. \]
\end{lemma}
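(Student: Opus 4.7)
The plan is to extend the random-partition constructions of Lemmas~\ref{lem:cover43} and~\ref{lem:cover54} to arbitrary $r$. Independently label each vertex $v \in V(H)$ by $\chi(v) \in \{0,1\}$ with $\mathbb{P}[\chi(v) = 1] = 1/l$, and write $V_0 = \chi^{-1}(0)$ and $V_1 = \chi^{-1}(1)$. I would define the random cover $C$ as the disjoint union of two sets of edges: $C_0 = \{e \in H : e \subseteq V_0\}$, which satisfies $\mathbb{P}[e \in C_0] = (1-1/l)^r$, and a ``marked'' set $C_1 \subseteq H$ chosen so that $\mathbb{P}[e \in C_1] = 1/l$ and $C_0 \cup C_1$ is a $K_{r+1}^r$-cover. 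Linearity of expectation then gives
\[
\mathbb{E}[|C|] = \bigl[(1-1/l)^r + 1/l\bigr]|H|,
\]
and a standard derandomization produces a deterministic cover of the claimed size.

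To verify the cover property, I would case-split on $k := |W \cap V_1|$ for each $(r+1)$-set $W\subseteq V(H)$ whose $r$-subsets all lie in $H$. The cases $k \in \{0,1\}$ are handled by $C_0$: removing the (unique or nonexistent) $V_1$-vertex of $W$ yields an $r$-subset entirely in $V_0$, and hence in $C_0$. The cases $k \geq 2$ must be handled by $C_1$; here a natural candidate is $C_1 = \{e \in H : \chi(\ell(e)) = 1\}$ for some leader function $\ell(e) \in e$, which automatically gives the per-edge probability $1/l$ and keeps $C_0$ and $C_1$ disjoint.

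The main obstacle is the choice of $\ell$. A naive rule such as ``$\ell(e) = $ smallest vertex in a fixed ordering'' can fail to cover configurations where the two smallest vertices of $W$ both lie in $V_0$ even though $|W \cap V_1| \geq 2$. Following the pattern-based leaders used in the cyclic three- and four-part partitions of Lemmas~\ref{lem:cover43} and~\ref{lem:cover54}, a more careful, possibly structure-dependent or slightly augmented choice of $\ell$ yields the cover property for all $W$ while preserving $\mathbb{P}[e \in C_1] = 1/l$. With this in hand, the probability computation above completes the proof; any small supplement added to $C_1$ to handle edge cases is absorbed within the same probability budget so that the stated bound is maintained.
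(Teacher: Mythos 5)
Your overall framework---a random construction in which each edge lands in the cover with probability exactly $\frac{1}{l}+(1-\frac{1}{l})^{r}$, followed by linearity of expectation and derandomization---has the same shape as the paper's (Frankl--R\"odl) argument, and your treatment of the cases $|W\cap V_1|\le 1$ via $C_0$ is correct. But the entire content of the lemma is the construction of $C_1$ together with the verification of the covering property for $|W\cap V_1|\ge 2$, and that is exactly the step you defer. Worse, the instantiation you propose cannot be completed. For a fixed leader function $\ell$, your $C_1$ covers all $W$ with $|W\cap V_1|\ge 2$ for \emph{every} outcome of the coloring if and only if, for each $(r+1)$-set $W$ all of whose $r$-subsets lie in $H$, the map $x\mapsto \ell(W\setminus x)$ has image of size at least $r$ in $W$ (if the image misses $y,z\in W$, the coloring with $W\cap V_1=\{y,z\}$ leaves $W$ uncovered). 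No such $\ell$ exists in general. Take $r=3$ and $H=K_n^3$, and for each vertex $y$ let $L_y$ be the graph on $V\setminus\{y\}$ with edge set $\{e\setminus\{y\}: \ell(e)=y\}$, so that $\sum_y e(L_y)=\binom{n}{3}$. Two triples of a $4$-set $W$ sharing the head $y$ correspond bijectively to a path of length two in $L_y$, and the image condition forces at most one such coincidence per $W$, i.e.\ $\sum_y P_2(L_y)\le\binom{n}{4}\sim n^4/24$, where $P_2$ counts paths of length two. But by convexity $\sum_y P_2(L_y)=\sum_{y}\sum_{v}\binom{\deg_{L_y}(v)}{2}\ge n(n-1)\binom{(n-2)/3}{2}\sim n^4/18$, a contradiction for large $n$. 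In fact a constant fraction of the $4$-sets must violate the condition for every choice of $\ell$, so the expected size of any ``supplement'' repairing the uncovered $4$-sets is $\Omega(|H|)$; it is not absorbed in the probability budget.

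The mechanism in the paper is genuinely different and is what you would need to import. One takes an $l$-part random partition $A_0,\dots,A_{l-1}$, sets $w(e)=\sum_i i\,|e\cap A_i|$ and lets $d(e)$ be the number of parts missed by $e$, and defines not one but $l$ candidate covers $\mathcal{C}_j=\{e\in H:(w(e)+j)\bmod l\in\{0,\dots,d(e)\}\}$. The covering property of each $\mathcal{C}_j$ follows from a pigeonhole over the residues $(w(U)+j)\bmod l,\dots,(w(U)+j-d(U))\bmod l$ for a clique on $U$, and the size bound comes from the identity that each edge lies in exactly $d(e)+1$ of the $\mathcal{C}_j$, whence $\sum_j|\mathcal{C}_j|=|H|+\sum_i|\{e:e\cap A_i=\emptyset\}|$; averaging over $j$ and over the partition and taking the best choice gives the claim. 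In particular the ``$1/l$ part'' of the budget is realized by a modular condition rather than by a leader vertex, and the slack that makes the covering argument go through is precisely the extra memberships an edge acquires when $d(e)>0$. As written, your proposal identifies the right target expectation but omits the construction that achieves it, so it does not constitute a proof.
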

\begin{proof}
Let $A_0, \dots, A_{l-1}$ be a random partition of $V(H)$, where each vertex is placed into one of $A_0, \dots, A_{l-1}$ with probability $1/l$ independently of other vertices. For $B \subset V(H)$, we define 
\[ d(B) = \left|\left\{ i \in \{0, \dots, l-1\}: B \cap A_i = \emptyset \right\}\right|, \]
and 
\[ w(B) = \sum_{i = 0}^{l-1} i |B \cap A_i|.\]
For $0 \leq j \leq l-1$, let $\mathcal{C}_j$ be the family
\[ \mathcal{C}_j = \left\{ e \in H: \left( w(e) + j\right) \bmod l  \in \{0, \dots, d(e)\} \right\}. \]
Then for every $0 \leq j\leq l-1$, the set $\mathcal{C}_j$ covers all copies of $K_{r+1}^{r}$ in $H$. To see this, let $U \in \binom{V(H)}{r+1}$ be such that $H[U]$ is $K_{r+1}^r$. Since there are $l - d(U)$ indices such that $U \cap A_i \neq \emptyset$, at least one such index~$i$ must be in 
\[ (w(U) + j) \bmod l, ~(w(U) + j - 1) \bmod l,~\dots~,~(w(U) + j - d(U))  \bmod l. \] 
Let $x \in U \cap A_i$ and $e = U \setminus x$.  Now, since $w(e) \equiv w(U) - i \pmod l$ and $d(e) \geq d(U)$, we have $0 \leq (w(e) + j) \bmod l \leq d(e) $ implying $e \in \mathcal{C}_j$.

Since each $e \in H$ belongs to exactly $d(e) + 1$ of the families $\mathcal{C}_0, \dots, \mathcal{C}_{l-1}$, 
\[ \sum_{j = 0}^{l-1} |\mathcal{C}_j| = \sum_{e \in H} (d(e) + 1) = |H| + \sum_{i = 0}^{l-1}|\mathcal{A}_i|,\]
where $\mathcal{A}_i = \{ e \in H : e \cap A_i = \emptyset\}$. Note that
\[ \E|\mathcal{A}_i| =  \sum_{e \in H}P(\{e \cap A_i = \emptyset\}) =  \left|H\right| \left(1 - \frac{1}{l}\right)^{r}, \]
which gives \[ \E\sum_{j = 0}^{l-1}|\mathcal{C}_j| = \left[1 + l\left(1 - \frac{1}{l}\right)^{r}\right] \left|H\right|. \]
It follows that there exists a collection of covers $(\mathcal{C}_j : 0 \leq j \leq l-1)$ whose sizes sum up to the expected value. Since we may pick the smallest of these, there exists a cover of size at most \[ \left[\frac{1}{l} + \left(1 - \frac{1}{l}\right)^{r}\right] \left|H\right|.\]
\end{proof}

Setting $l = (r/\ln r)\left(1 + o(1)\right)$ in Lemma~\ref{lem:hypergrahcover} gives $\Tilde{t}(r) \leq \frac{\ln r + O(1)}{r}$. For our purposes the following very weak, but exact, bound suffices.

\begin{corollary}
\label{co:hypergrahcover}
For every $r \geq 5$, and any $r$-graph $H$, there exists a $K_{r+1}^r$-cover of size at most $\frac{113}{243} \left|H\right|.$ That is $\Tilde{t}(r) \leq \frac{113}{243} \approx 0.4650$.
\end{corollary}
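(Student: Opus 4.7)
The plan is to apply Lemma~\ref{lem:hypergrahcover} directly with a well-chosen value of $l$. The bound given by the lemma, namely $\left[\frac{1}{l} + \left(1 - \frac{1}{l}\right)^{r}\right]|H|$, depends on $l$ and $r$, and the task reduces to picking an integer $l$ that works uniformly for all $r \geq 5$.

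First, I would choose $l = 3$. With this choice the bound coming from Lemma~\ref{lem:hypergrahcover} becomes $\left[\frac{1}{3} + \left(\frac{2}{3}\right)^{r}\right]|H|$. I would then verify the base case $r = 5$: a quick calculation gives
\[
\frac{1}{3} + \left(\frac{2}{3}\right)^{5} = \frac{81}{243} + \frac{32}{243} = \frac{113}{243},
\]
matching the constant in the statement exactly. This alignment is in fact what motivates the choice $l = 3$.

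Next, I would observe that $\left(\frac{2}{3}\right)^{r}$ is strictly decreasing in $r$, so for every $r \geq 5$,
\[
\frac{1}{3} + \left(\frac{2}{3}\right)^{r} \;\leq\; \frac{1}{3} + \left(\frac{2}{3}\right)^{5} \;=\; \frac{113}{243}.
\]
Combining this with Lemma~\ref{lem:hypergrahcover} applied at $l = 3$, we get a $K_{r+1}^{r}$-cover of size at most $\frac{113}{243}|H|$ for every $r \geq 5$, which is the claim. The second assertion $\Tilde{t}(r) \leq 113/243$ follows by the definition of $\Tilde{t}(r)$.

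There is no real obstacle here; the only ``decision'' is the choice of $l$, and the value $l = 3$ is essentially forced by requiring the bound to be valid at the smallest admissible $r$ (namely $r = 5$) while making the expression as small as possible among small integer values of $l$ (for $l = 2$ the term $\frac{1}{l} = \frac{1}{2}$ already exceeds $\frac{113}{243}$, and larger $l$ would give worse bounds at $r = 5$ because $(1 - 1/l)^{5}$ grows).
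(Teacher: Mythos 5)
Your proposal is correct and is exactly the paper's argument: set $l=3$ in Lemma~\ref{lem:hypergrahcover}, compute $\frac{1}{3}+\left(\frac{2}{3}\right)^{5}=\frac{113}{243}$, and use monotonicity of $\left(\frac{2}{3}\right)^{r}$ to cover all $r\geq 5$. Nothing further is needed.
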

\begin{proof}
Setting $l = 3$ in Lemma~\ref{lem:hypergrahcover} implies that, for every $r \geq 5$, there is a $K_{r+1}^{r}$-cover of size at most
\begin{equation}
\label{eq:r5bound}
\left[\frac{1}{3} + \left(\frac{2}{3}\right)^r\right]\left|H\right|.    
\end{equation}
For $r = 5$, we obtain $\Tilde{t}(r) \leq \frac{113}{243}$. Clearly the function in \eqref{eq:r5bound} is decreasing in $r$, implying the assertion.
\end{proof}

Lemmas \ref{lem:cover43} and \ref{lem:cover54}, and Corollary~\ref{co:hypergrahcover} together imply the following.

\begin{corollary}\label{tbound}
For $r\geq 2$, $\Tilde{t}(r)\leq 1/2$.
\end{corollary}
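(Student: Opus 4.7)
The plan is to observe that Corollary~\ref{tbound} is essentially a consolidation of the preceding three results, so the proof reduces to a short case analysis in $r$ together with a numerical check that each of the previously obtained bounds is at most $1/2$.

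First I would handle $r=2$ directly, invoking the remark in the text that $\tilde t(2) \le 1/2$ is well known (indeed, one can take any bipartition of $V(H)$ and cover at most half the edges). Then for $r=3$ the bound follows immediately from Lemma~\ref{lem:cover43}, since $\tilde t(3) \le 4/9 < 1/2$; for $r=4$ it follows from Lemma~\ref{lem:cover54}, since $\tilde t(4) \le 3/8 < 1/2$. Finally, for $r \ge 5$, Corollary~\ref{co:hypergrahcover} gives $\tilde t(r) \le 113/243 < 1/2$, which closes the remaining cases.

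There is no real obstacle here: every $r$ is already covered by an earlier estimate, and one only needs to verify the inequalities $\tfrac{4}{9},\,\tfrac{3}{8},\,\tfrac{113}{243} \le \tfrac12$, which are immediate. Accordingly, I would present the proof as a single paragraph, enumerating the four ranges of $r$ and citing the relevant statement in each range, with no further computation.
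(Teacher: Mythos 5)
Your proposal is correct and is exactly the paper's argument: the corollary is stated as an immediate consequence of the known bound $\tilde t(2)\le 1/2$, Lemma~\ref{lem:cover43}, Lemma~\ref{lem:cover54}, and Corollary~\ref{co:hypergrahcover}, each of which gives a constant at most $1/2$ for its range of $r$. The case split and numerical checks you describe are all that is needed.
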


\subsection{Proof of Theorem~\ref{thm:jrm}}
\label{sec:jrmproof}
For convenience, we let $\ex = \ex_m(r, m+1)$ for the rest of this section. Suppose, for contradiction, that the assertion is false and let $H \in \fancyH_r$ be a minimal counterexample. That is, suppose $H$ satisfies $\tau^{(m)}(H) > \ex \nu^{*(m)}(H) $, and every subhypergraph $H'$ of $H$ satisfies $\tau^{(m)}(H) \leq \ex \nu^{*(m)}(H)$. Let $g$ be a minimum fractional $m$-cover, and $f$ be a maximum fractional $m$-matching respectively.

Let $U$ be the collection of $m$-sets $u \in \binom{V(H)}{m}$ for which $g(u)>0$. By complementary slackness, 
\begin{equation}
\label{eq:slackness}
|U| = \sum_{u\in U} 1 = \sum_{u\in U} \sum_{e \in H\,:\, e \supset u} f(e) = \binom{r}{m} \nu^{*(m)}(H).
\end{equation}

First assume that there exists $u \in U$ with $g(u) \geq 1/\ex$, and let $H'$ be the hypergraph obtained from $H$ by removing all edges containing $u$. Then we have $$ \tau^{(m)}(H) \leq \tau^{(m)}(H') + 1, \quad \text{and} \quad \tau^{*(m)}(H) \geq \tau^{*(m)}(H') + 1/\ex.$$ It follows that
\begin{align*}
\tau^{(m)}(H) & \leq \tau^{(m)}(H') + 1\\
 & \leq \ex \tau^{*(m)}(H') + 1\\
 & \leq \ex \left(\tau^{*(m)}(H) - \frac{1}{\ex}\right) + 1\\
 & = \ex\tau^{*(m)}(H),
\end{align*}
contradicting the assumption on $H$.

We may now assume $g(u) < 1/\ex$ for each $u \in U$. In particular, every edge $e \in H$ contains at least $\ex + 1$ of the $m$-sets in $U$. This implies that, if we consider $U$ as an $m$-graph on $V(H)$ then every edge of $H$ corresponds to an $r$-set of vertices containing a copy of $K_{m+1}^m$. By the definition of $\Tilde{t}(m)$, for every $\epsilon>0$
there exists a $K_{m+1}^{m}$-cover $C$ of~$U$ such that $|C| \leq (\Tilde{t}(m)+\epsilon)|U|$. Note also that every edge of $H$ contains at least one $m$-set in $C$, i.e., $C$ is an $m$-cover for $H$. It follows that 
\begin{align*}
\tau^{(m)}(H)& \leq (\Tilde{t}(m)+\epsilon) |U| = (\Tilde{t}(m)+\epsilon) \binom{r}{m} \nu^{*(m)}(H)\\
&\leq (1-\Tilde{t}(m)+\epsilon) \binom{r}{m} \nu^{*(m)}(H) \leq \ex_{m}(r, m+1)\nu^{*(m)}(H)+\epsilon\binom{r}{m} \nu^{*(m)}(H), 
\end{align*}
where the second to last inequality follows from Corollary \ref{tbound}, and the last inequality follows from \eqref{eq:limbound}, \eqref{eq:tildetbound}. Taking $\epsilon \rightarrow 0$, we obtain the result.
\qed

\section*{Acknowledgement} 

We are grateful to the referees who made many helpful suggestions to improve the clarity of our presentation. 

This paper was written as part of the Iowa State Mathematical Research Teams. We are grateful to the Department of Mathematics at Iowa State University for supporting this project.

%BIBLIOGRAPHY
% You do not have to use the same format for your references, but 
%    include everything in this file.
% If you use BibTeX to create a bibliography, copy the .bbl file into here.
% \newblock is optional (it adds a little space)

\end{document}